\theoremstyle{definition}
\newcommand{\Z}{\mathbb{Z}}
\newcommand{\C}{\mathbb{C}}
\newtheorem{proposition}{Proposition}[section]
\newtheorem{theorem}[proposition]{Theorem}
\newtheorem{definition}[proposition]{Definition}
\newtheorem{lemma}[proposition]{Lemma}
\newtheorem{corollary}[proposition]{Corollary}
\newtheorem{remark}[proposition]{Remark}
\begin{document}

\title{Contact and isocontact embedding of $\pi$--manifolds}

\subjclass{Primary: 53D10. Secondary: 53D15, 57R17.}
%\date{}

\keywords{contact structures, embedding, h-principle.}
\thanks{}

\author{Kuldeep Saha}

\address{Chennai Mathematical Institute,
H1, SIPCOT IT 	Park, Siruseri, Kelambakkam
603103, Tamilnadu, India}
\email{kuldeep@cmi.ac.in}

%\begin{document}

%\renewcommand{\theenumi}{\roman{enumi}}

\begin{abstract}
	
 We prove some contact analogs of smooth embedding theorems for closed $\pi$--manifolds. We show that a closed, $k$-connected, $\pi$--manifold of dimension $(2n+1)$ that bounds a $\pi$--manifold, contact embeds in the $(4n-2k+3)$-dimensional Euclidean space with the standard contact structure. We also prove some isocontact embedding results for $\pi$--manifolds and parallelizable manifolds.

\end{abstract}

\maketitle

\section{introduction}

Embedding of manifolds in the Euclidean spaces has long been a problem of great importance in geometric topology. Over the years many remarkable results concerning embedding of manifolds were obtained. The first major breakthrough in this direction was the Whitney embedding theorem~\cite{Wh}, which says that every $n$-manifold can be smoothly embedded in $\mathbb{R}^{2n}$. Later, Haefliger and Hirsch (\cite{HH}) generalized Whitney's theorem to show that a closed, orientable, $k$-connected $n$-manifold can be embedded in $\mathbb{R}^{2n-k-1}$. 

Recall that a manifold $M$ is called a $\pi$-manifold, provided the direct sum of its tangent bundle with the trivial real line bundle is trivial. In \cite{Sa}, Sapio improved the Haefliger-Hirsch  embedding theorem for $k$-connected $\pi$-manifolds to produce embeddings with trivial normal bundle in $\mathbb{R}^{2n-2k-1}$. In this note we study the contact analogs of some of these embedding results.

 A contact manifold is an odd dimensional smooth manifold $M^{2n+1}$, together with a maximally non-integrable hyperplane distribution $\xi \subset TM$. A contact form $\alpha$ representing $\xi$ is a local $1$-form on $M$ such that $\xi=Ker\{\alpha\}$. The contact condition is equivalent to saying that $\alpha\wedge(d\alpha)^n$ is a volume form. The $2$-form $d\alpha$ then induces a conformal symplectic structure on $\xi$. If the line bundle $TM/\xi$ over $M$ is trivial, then the contact structure is said to be co-orientable. For a co-orientable contact structure $\xi$, one can define a contact form $\alpha$ representing $\xi$ on all of $M$. In this article, we will only consider co-orientable contact structures on closed, orientable manifolds. We will denote a manifold $M$ together with a contact structure $\xi$ by $(M,\xi)$. We will use $\xi_{std}$ to denote the standard contact structure on an odd dimensional Euclidean space $\mathbb{R}^{2N+1}$ given by $Ker\{dz + \Sigma^N_{i=1} x_idy_i\}$. The trivial real vector bundle of rank $r$ over a space $Z$ will be denoted by $\varepsilon^r_Z$ and $\varepsilon^r_Z(\mathbb{C})$ will denote the trivial complex vector bundle of complex rank $r$ over $Z$. An embedding is always assumed to be smooth. All the embedding results will be stated for closed manifolds.

\begin{definition}[Isocontact embedding] $(M^{2n+1},\xi)$ admits an isocontact embedding in $(V^{2N+1},\eta)$, if there is an embedding $\iota: M \hookrightarrow V$ such that for all $p$ in M, $D\iota(T_pM)$ is transverse to $\eta_{\iota(p)}$ and $D\iota(T_pM)\cap{\eta}_{\iota(p)} = D\iota({\xi}_p)$. A manifold $M^{2n+1}$ contact embeds in $(V^{2N+1},\eta)$ if there exists a contact structure $\xi_0$ on $M^{2n+1}$ such that $(M,\xi_0)$ has an isocontact embedding in $(V^{2N+1},\eta)$.
\end{definition}	
	
It follows from the definition that if $\alpha$ is a contact form representing $\xi$ and $\beta$ is a contact form representing $\eta$. Then $\iota^*(\beta) = h\cdot\alpha$ for some positive function $h$ on $M$. $D\iota(\xi)$ is a conformal symplectic sub-bundle of $(\eta|_{\iota(M)},d\beta)$.

Similarly, we can define the notion of an isocontact immersion.
	  
\begin{definition}[Isocontact immersion]
		An isocontact immersion of $(M,\xi)$ in $(\mathbb{R}^{2N+1},\eta)$ is an immersion $\iota:(M,\xi)\looparrowright (\mathbb{R}^{2N+1},\eta)$ such that $D\iota(TM)$ is transverse to $\eta$ and $D\iota(TM)\cap{\eta} = D\iota(\xi)$.
\end{definition}
      
 Gromov \cite{Gr} reduced the existence of an isocontact embedding of a contact manifold $(M^{2n+1},\xi)$ in a contact manifold $(V^{2N+1},\eta)$, for $N \geq n+2$, to a problem in obstruction theory. Gromov \cite{Gr} proved that any contact manifold $(M^{2n+1},\xi)$ has an isocontact embedding in $(\mathbb{R}^{4n+3},\xi_{std})$. This result, which is essentially the contact analog of Whitney's embedding theorem, was reproved later by A. Mori \cite{Mo} for $n=1$ and by D. M. Torres \cite{Tor} for all $n$ using different techniques. For isocontact embeddings of a contact manifold $(M,\xi)$ of co-dimension $\leq dim(M)-1$, there is a condition on the Chern classes of $\xi$. This condition comes from the normal bundle of the embedding. See the Remark \ref{remark on chern class and normal bundle} for a precise statement. So, one has to restrict the isocontact embedding question to contact structures which satisfy that condition. Given this, a theorem of N. Kasuya (\cite{Ka}, Theorem $1.5$) says that for $2$-connected $(2n+1)$-contact manifolds, the Haefliger-Hirsch theorem has a contact analog giving isocontact embedding in $(\mathbb{R}^{4n+1},\xi_{std})$. Here, we investigate the contact analog of Sapio's theorem for $\pi$-manifolds and also provide some contact embedding results for parallelizable manifolds. Before stating our results, we introduce some terminologies.   

\

In \cite{Sa}, Sapio introduced the notion of an almost embedding. A manifold $M^n$ \emph{almost embeds} in a manifold $W^N$, if there exists a homotopy sphere $\Sigma^n$ so that $M^n\#\Sigma^n$ smoothly embeds in $W^N$. We want to define analogous notions for contact and isocontact embeddings. Recall that if $(M,\xi_M)$ and $(N^{2n+1},\xi_N)$ are two contact manifolds, then by $(M^{2n+1}\#N^{2n+1},\xi_M\#\xi_N)$ we denote the contact connected sum of them. For details on a contact connected sum we refer to chapter $6$ of \cite{Ge}.
 
 \begin{definition}[Homotopy isocontact embedding]
 	$(M^{2n+1},\xi)$ admits a homotopy isocontact embedding in $(\mathbb{R}^{2N+1},\xi_{std})$, if there exists a contact homotopy sphere $(\Sigma^{2n+1},\eta)$ such that $(M^{2n+1}\#\Sigma^{2n+1},\xi\#\eta)$ has an isocontact embedding in $(\mathbb{R}^{2N+1},\xi_{std})$. We say, $M^{2n+1}$ homotopy contact embeds in $(\mathbb{R}^{2N+1},\xi_{std})$, if there is a contact structure $\xi_{0}$ on $M\#\Sigma^{2n+1}$ such that $(M\#\Sigma^{2n+1},\xi_{0})$ has an isocontact embedding in $(\mathbb{R}^{2N+1},\xi_{std})$.  
 \end{definition}

Before stating our results, we describe the contact structures we will be considering. For an isocontact embedding of $(M^{2n+1},\xi)$ of co-dimension $2(N-n)$ with trivial symplectic normal bundle, we need that the Chern classes $c_i(\xi)$ vanish for $1 \leq i \leq n$. For details see the Remark \ref{remark on chern class and normal bundle}. Note that by a theorem of Peterson (Theorem $2.1$, \cite{Ke}), if $(M^{2n+1},\xi)$ is torsion free, then this condition is true if and only if $\xi$ is trivial as a complex vector bundle over the $2n$-skeleton of $M$. Consider the fibration map $SO(2n+2) \rightarrow \Gamma_{n+1}$ with fiber $U(n+1)$, where $\Gamma_{n+1}$ denotes the space of almost complex structures on $\mathbb{R}^{2n+2}$. Since $TM \oplus \varepsilon_M^1$ is trivial for a $\pi$--manifold, one can postcompose a trivialization map to $SO(2n+2)$ with the above fibration map to get an almost contact structure on $M$. For notions of almost complex and almost contact structures see section $2.2$. 

\begin{definition}
	A contact structure on a $\pi$--manifold $M$, representing an almost contact structure that factors through a map from $M$ to $SO(2n+2)$ as mentioned above, will be called an \emph{$SO$-contact structure}.
\end{definition}

 We now state an analog of Sapio's Theorem for contact $\pi$--manifolds.

\begin{theorem} \label{1st theorem}
	
	Let $M^{2n+1}$ be a $k$-connected, $\pi$--manifold. Assume that $n \geq 2$ and $k \leq n-1 $. Then
	
	\begin{enumerate}
		
		\item $M^{2n+1}$ homotopy contact embeds in $({\mathbb{R}}^{4n-2k+3},\xi_{std})$.
		\item If $n \not \equiv 3 \ (mod \ 4)$ and for all $i \in \{k+1,\cdots,2n-k\}$ such that $i \equiv 0,2,6,7 \ (mod \ 8),\ H_{2n-i+1}(M) = 0$, then for any contact structure $\xi$ on $M^{2n+1}$, $(M,\xi)$ has a homotopy isocontact embedding in $({\mathbb{R}}^{4n-2k+3},\xi_{std})$. 
		\item If $n \not \equiv 3 \ (mod \ 4)$ and for all $i \in \{k+1,\cdots,2n-k\}$ such that $i \equiv 0,7 \ (mod \ 8),\ H_{2n-i+1}(M) = 0$, then for any $SO$-contact structure $\xi$ on $M^{2n+1}$, $(M,\xi)$ has a homotopy isocontact embedding in $({\mathbb{R}}^{4n-2k+3},\xi_{std})$.  
		\item If $M^{2n+1}$ bounds a $\pi$--manifold, then we can omit ``homotopy'' in the above statements.
		
	\end{enumerate}

We remark that in all the statements above, we get contact or isocontact embeddings with a trivial conformal symplectic normal bundle.

\end{theorem}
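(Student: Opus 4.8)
The plan is to combine Sapio's smooth embedding theorem with Gromov's $h$-principle for isocontact embeddings, so that the whole problem collapses to a single obstruction computation for the almost contact structure. Write $N = 2n-k+1$, so that the target is $(\R^{2N+1},\xi_{std})$; the hypothesis $k\le n-1$ is exactly $N\ge n+2$, the range in which Gromov's $h$-principle is available. First I would run Sapio's theorem on the $k$-connected $\pi$-manifold $M^{2n+1}$: it yields a homotopy sphere $\Sigma^{2n+1}$ such that $M\#\Sigma$ embeds smoothly in $\R^{4n-2k+1}\subset \R^{4n-2k+2}=\R^{2N}$ with trivial normal bundle. This provides the smooth embedding that will underlie the formal solution, and the codimension is large enough that the conformal symplectic normal bundle has complex rank $N-n=n-k+1$. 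Since the homotopy spheres produced here bound $\pi$-manifolds, they carry contact structures, so the contact connected sum $(M\#\Sigma,\ \xi\#\eta)$ is defined.

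Next I would invoke Gromov's $h$-principle to replace the isocontact embedding by its formal counterpart. Along the smooth embedding, a formal solution is a complex bundle monomorphism $(\xi,J)\hookrightarrow \varepsilon^N_M(\C)$ whose complex cokernel is the symplectic normal bundle; asking for a trivial conformal symplectic normal bundle means asking for this cokernel to be trivial. Because $M$ is a $\pi$-manifold, the almost contact structure carried by $\xi$ is a complex structure on $\varepsilon^{2n+2}_M=TM\oplus\varepsilon^1_M$, i.e.\ a map $M\to\Gamma_{n+1}=SO(2n+2)/U(n+1)$, which I stabilize to $\bar\psi\colon M\to SO/U$. Up to the metastable-range issues taken up below, producing this monomorphism with trivial complement amounts to null-homotoping $\bar\psi$.

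The heart of the argument is the obstruction analysis for $\bar\psi\simeq\ast$, whose obstructions lie in $H^i(M;\pi_i(SO/U))$. By Bott periodicity $\pi_i(SO/U)=0$ unless $i\equiv 0,2,6,7\pmod 8$, being $\Z_2$ for $i\equiv 0,7$ and $\Z$ for $i\equiv 2,6$; Poincar\'e duality identifies $H^i(M)\cong H_{2n-i+1}(M)$, and $k$-connectivity confines the potentially nonzero obstructions to $k+1\le i\le 2n-k$, apart from the top class in $H^{2n+1}(M;\pi_{2n+1}(SO/U))$. The latter vanishes precisely when $\pi_{2n+1}(SO/U)=0$, i.e.\ when $2n+1\not\equiv 7\pmod 8$, which is the hypothesis $n\not\equiv 3\pmod 4$. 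For (2) the homology conditions at $i\equiv 0,2,6,7$ kill every remaining obstruction for an arbitrary $\xi$. For (3) an $SO$-contact structure factors $\bar\psi$ as $M\xrightarrow{g}SO(2n+2)\to\Gamma_{n+1}$, so that $\bar\psi=\pi\circ g$ and its primary obstructions are images under $\pi_*\colon\pi_i(SO)\to\pi_i(SO/U)$; since $\pi_i(SO)=0$ for $i\equiv 2,6\pmod 8$ the $\Z$-valued obstructions die automatically, leaving only $i\equiv 0,7$ and hence the weaker hypothesis. For (1) I am free to choose $\xi_0$, so I take the constant almost contact structure $J_0$ on $\varepsilon^{2n+2}_M$ (an $SO$-contact structure factoring through a constant map), for which $\bar\psi$ is null outright; realizing this class by a genuine contact structure via the contact $h$-principle then requires no congruence or homology hypothesis at all.

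The main obstacle I expect is the obstruction-theoretic bookkeeping in the third step: organizing the full tower of obstructions so that the stated congruence and vanishing hypotheses are exactly what is needed, and in particular passing from stable triviality of $\bar\psi$ to genuine triviality of the rank-$(n-k+1)$ conformal symplectic normal bundle --- this is where the large codimension and the $k$-connectivity must be used to stay inside the metastable range and to dispose of the top-dimensional (unstable) obstruction. Finally, statement (4) follows by the same scheme once one notes that if $M$ bounds a $\pi$-manifold then Sapio's obstruction to a genuine (rather than almost) embedding vanishes, so one may take $\Sigma=S^{2n+1}$ and delete the connected sum throughout, removing the word ``homotopy''.
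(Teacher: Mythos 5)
Your obstruction analysis is the right one and coincides with the paper's: the obstructions live in $H^i(M;\pi_i(SO/U))$, $k$-connectivity and Poincar\'e duality confine them to $k+1\le i\le 2n-k$ plus the top degree, $n\not\equiv 3\pmod 4$ kills the top coefficient group, the homology hypotheses of (2) kill the rest, the factorization through $\pi_i(SO)$ disposes of the $\Z$-valued classes in (3), and the null-homotopic class settles (1). The gap is in the reduction that is supposed to produce this obstruction problem. You want to apply Theorem \ref{h-principle for contact embedding} directly to the smooth embedding $f_0:M\hookrightarrow\R^{4n-2k+3}$ with target $\xi_{std}$, and you assert that the formal solution ``amounts to null-homotoping $\bar\psi$''. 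It does not: a formal solution there is a homotopy \emph{through real monomorphisms covering $f_0$} from $Df_0$ to a contact monomorphism. The existence of a complex monomorphism $\xi\hookrightarrow\varepsilon^N_M(\C)$ with trivial cokernel is indeed governed by $\bar\psi$, but the comparison with $Df_0$ is a second, independent obstruction tower in $H^i\bigl(M;\pi_i(V_{2N+1,2n+1})\bigr)$ (see ``From contact immersion to contact embedding'' in Section 2.3). With $2N+1=4n-2k+3$ this Stiefel manifold is only $(2n-2k+1)$-connected, so nontrivial obstructions survive in degrees $2n-2k+2,\dots,2n-k$ and above all in the top degree $2n+1$, where $\pi_{2n+1}(V_{2N+1,2n+1})$ need not vanish and is not controlled by any hypothesis of Theorem \ref{1st theorem}. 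That this is not mere ``bookkeeping'' is shown by the paper's own Theorem \ref{2nd theorem}(2), where precisely this top obstruction forces the extra hypotheses $n\equiv 0,1\pmod 4$ and the Hoo--Mahowald computation. (The issue you do flag, destabilizing from $SO/U$ to the actual rank, is the harmless one: it is handled by the fibration $\Gamma_m\to\Gamma_{m+1}\to S^{2m}$.)

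The paper's proof never compares anything with $Df_0$, and that is the idea your proposal is missing. Lemma 3.1 puts the explicit contact form $\alpha+\sum_i r_i^2\,d\theta_i$ on the trivialized tubular neighborhood $M\times\DD^{2(N-n)}$ inside $\R^{4n-2k+1}$, extends the associated almost contact structure over the contractible complement --- this relative extension problem is exactly where your $H^i(M;\pi_i(\Gamma_{N+1}))$ computation lives --- and then applies Gromov's existence h-principle for contact structures on the \emph{open} manifold $\R^{4n-2k+1}$ (Theorem \ref{existence of contact structure gromov}) relative to the neighborhood of $M$. The embedding is then isocontact by construction into some $(\R^{4n-2k+1},\eta_0)$, and the passage to $(\R^{4n-2k+3},\xi_{std})$ is the soft codimension-two open case of Theorem \ref{h-principle for contact embedding} applied to the ambient Euclidean space, with no further obstruction on $M$. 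The two extra dimensions in the target are exactly the price paid to avoid the real Stiefel obstructions; to repair your argument you would either have to adopt this route or separately kill the $V_{2N+1,2n+1}$-obstructions, which the stated hypotheses do not allow.
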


Note that Theorem \ref{1st theorem} provides criteria to find examples of isocontact embeddings of $\pi$-manifolds in the standard contact euclidean space. For example, a straightforward application of statement $2$ and $4$ in Theorem \ref{1st theorem} shows that every contact structure on $S^4 \times S^5$ has an isocontact embedding in $(\mathbb{R}^{15},\xi_{std})$. Here, we have $k = 2$ and $n = 4$. Similarly, one can check that all contact structures on $S^4 \times S^9$ ($k = 3$) and $S^{11} \times S^{12}$ ($k = 10$) admit isocontact embeddings in $(\mathbb{R}^{21},\xi_{std})$ and $(\mathbb{R}^{27},\xi_{std})$ respectively.

The proof of Theorem $\ref{1st theorem}$ is based on Gromov's h-principle for existence of contact structure on open manifold (see Theorem \ref{existence of contact structure gromov}). Roughly speaking, we put a contact structure on a tubular neighborhood of the embedded contact manifold, extend it to an almost contact structure on the ambient manifold using  obstruction theory and then apply Gromov's h-principle.

\begin{corollary} \label{1st corollary}
	Let $M^{2n+1}$ be an $(n-1)$-connected $\pi$--manifold that bounds a $\pi$--manifold. Then
	
	\begin{enumerate}
		\item $M^{2n+1}$ contact embeds in $({\mathbb{R}}^{2n+5},\xi_{std})$.
		\item If $n \equiv 4,5 \ (mod\ 8)$, then for any contact structure $\xi$, $(M,\xi)$ has an isocontact embedding in $({\mathbb{R}}^{2n+5},\xi_{std})$.
	\end{enumerate}
	
   	In particular, any contact homotopy sphere ${\Sigma}^{2n+1}$ that bounds a parallelizable manifold has an isocontact embedding in $({\mathbb{R}}^{2n+5},\xi_{std})$, for $n \equiv 0,1,2\ (mod\ 4)$. 
\end{corollary}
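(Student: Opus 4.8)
The plan is to obtain all three assertions as the borderline case $k = n-1$ of Theorem~\ref{1st theorem}. With $k = n-1$ the target dimension is $4n-2(n-1)+3 = 2n+5$, and the hypotheses $n \geq 2$, $0 \leq k \leq n-1$ hold for an $(n-1)$-connected $M$ (I take $n \geq 2$ throughout). For part (1) I would invoke statement (1) of Theorem~\ref{1st theorem} to get a homotopy contact embedding of $M^{2n+1}$ in $(\mathbb{R}^{2n+5},\xi_{std})$, and then statement (4)---available because $M$ bounds a $\pi$-manifold---to delete the word ``homotopy'' and produce a genuine contact embedding. No congruence or homological hypothesis enters, so part (1) is immediate.

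For part (2) the first step is to record what $(n-1)$-connectedness imposes on the homology of the closed orientable manifold $M^{2n+1}$. Hurewicz gives $\tilde{H}_j(M)=0$ for $0<j<n$, and Poincar\'e duality $H_j(M) \cong H^{2n+1-j}(M)$ together with the universal coefficient theorem then forces $H_j(M)=0$ for $n+2 \leq j \leq 2n$, with $H_{n+1}(M)$ isomorphic to the free part of $H_n(M)$. Hence the only middle-dimensional reduced homology that can survive sits in degrees $n$ and $n+1$. With $k=n-1$ the index set $\{k+1,\dots,2n-k\}$ of statement (2) collapses to $\{n,n+1\}$, and the groups $H_{2n-i+1}(M)$ it tests are exactly $H_{n+1}(M)$ (for $i=n$) and $H_n(M)$ (for $i=n+1$). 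I would then match these indices against the critical residues $0,2,6,7 \pmod 8$: when $n \equiv 4 \pmod 8$ the pair $\{n,n+1\} \equiv \{4,5\}$ avoids every critical residue, so the homological hypotheses of statement (2) hold with no condition on $M$, and statement (2) followed by statement (4) yields the isocontact embedding for every contact structure $\xi$. The residue $n \equiv 5 \pmod 8$ is the borderline case, treated below.

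For the final ``In particular'' clause I would apply Theorem~\ref{1st theorem} directly to $M=\Sigma^{2n+1}$, a homotopy sphere bounding a parallelizable manifold $W$. Such a $\Sigma$ is $2n$-connected, hence $(n-1)$-connected; it is stably parallelizable, hence a $\pi$-manifold; and it bounds the $\pi$-manifold $W$, so every hypothesis is met. The decisive simplification is that a homotopy sphere has $\tilde{H}_j(\Sigma)=0$ for $1 \leq j \leq 2n$, so $H_n(\Sigma)=H_{n+1}(\Sigma)=0$ and \emph{all} the homological conditions in statement (2) are vacuous, whatever the residue of $n$. The only surviving hypothesis is then $n \not\equiv 3 \pmod 4$, i.e. $n \equiv 0,1,2 \pmod 4$, and statements (2) and (4) produce an isocontact embedding of $(\Sigma,\xi)$ in $(\mathbb{R}^{2n+5},\xi_{std})$ for every contact structure $\xi$. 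This clause follows from the theorem directly rather than from part (2), which is exactly why its congruence condition is the weaker mod-$4$ one.

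Because Theorem~\ref{1st theorem} is granted there is no $h$-principle or analytic obstacle left; the argument is a specialization together with the homological reduction above. The step I expect to be delicate is the index bookkeeping in part (2), specifically the borderline residue $n \equiv 5 \pmod 8$: there the single critical index $i=n+1 \equiv 6$ tests $H_n(M)$, which $(n-1)$-connectedness does not kill, so establishing the conclusion for \emph{every} such $M$ requires showing the corresponding obstruction vanishes on $\pi$-manifolds---for instance via the shorter critical list $\{0,7\} \pmod 8$ that statement (3) provides for $SO$-contact structures, or by a direct parity argument on that single obstruction. For the homotopy-sphere statement this difficulty disappears, since there is no middle homology, which is why that case is clean and only the mod-$4$ condition survives.
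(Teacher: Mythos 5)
Your derivation of part (1) and of the final ``in particular'' clause is correct and is essentially what the paper does: both are direct specializations of Theorem~\ref{1st theorem} at $k=n-1$, and for a homotopy sphere the homological hypotheses of statement (2) of that theorem are vacuous, leaving only $n\not\equiv 3\ (mod\ 4)$. Your index bookkeeping for part (2) is also accurate: with $k=n-1$ the tested groups are exactly $H_{n+1}(M)$ and $H_n(M)$, and the residue $n\equiv 4\ (mod\ 8)$ avoids every critical index, so that case does follow from the statement of the theorem.

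The genuine gap is the case $n\equiv 5\ (mod\ 8)$ of part (2), which you flag but do not close. There the index $i=n+1\equiv 6\ (mod\ 8)$ is critical for statement (2) of Theorem~\ref{1st theorem} and it tests $H_n(M)$, which $(n-1)$-connectedness does not kill; so this half of the corollary cannot be obtained from the \emph{statement} of the theorem, only from its proof. That is what the paper does: for $k=n-1$ it identifies the only remaining obstruction groups to extending the almost contact structure over $\mathbb{R}^{2n+3}$ as $H^{n}(M;\pi_{n}(SO))$ and $H^{n+1}(M;\pi_{n+1}(SO))$, and both \emph{coefficient} groups vanish precisely for $n\equiv 4,5\ (mod\ 8)$ --- no hypothesis on $H_n(M)$ is needed because the coefficients, not the cohomology of $M$, are zero. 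Your instinct that the $SO$-coefficient mechanism behind statement (3) is the missing ingredient is exactly right: the factorization through $SO(2N+2)$ replaces $\pi_{n+1}(\Gamma_{N+1})$, which by Theorem~\ref{bott's theorem} is $\mathbb{Z}$ when $n+1\equiv 6\ (mod\ 8)$, by $\pi_{n+1}(SO)=0$. But you only gesture at this, and statement (3) as stated covers only $SO$-contact structures, so invoking it does not by itself give ``any contact structure'' as the corollary asserts; one must either argue that the relevant contact structures are $SO$-contact structures or run the obstruction computation directly as the paper does. As written, your proposal establishes part (2) only for $n\equiv 4\ (mod\ 8)$.
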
 

  For example, by \cite{KM}, we get that all $11$-dimensional contact homotopy spheres has an isocontact embedding in $({\mathbb{R}}^{15},\xi_{std})$.

 \begin{remark}{(\emph {On optimal dimension of embedding})}
 In section $4$ of \cite{Sa}, Sapio constructs a family of $(r- \rho(r) - 1)$-connected $(2r - \rho(r) - 1)$-dimensional manifolds, denoted by $M(r)$. Here, $r = (2a+1) 2^{b+4c}, 0 \leq b \leq 3, a,b,c \in \mathbb{Z}_{\geq0}$ and $\rho(r) = 2^b + 8c$. Note that $[2(2r -\rho(r) - 1) - 2(r - \rho(r) - 1) - 1] = 2r - 1$. Sapio shows that $M(r)$ bounds a $\pi$-manifold. So, by Theorem \ref{thm:sapio}, $M(r)$ embeds in $\mathbb{R}^{2r-1}$. But $M(r)$ does not embed in $\mathbb{R}^{2r-2}$. It follows from the discussion in section \ref{codim 4 embedn} that if we assume $n-k \geq 2$ in Theorem \ref{1st theorem}, then we can improve the dimension of embedding to $4n - 2k + 1 = 2(2n + 1) - 2k - 1$. Therefore, the family of examples given by the manifolds $M(r)$, for $\rho(r) \geq 4$, actually show that for $n-k \geq 2$, $(4n - 2k +1)$ is the optimal dimension of contact embedding.
 	
 \end{remark}

 Using similar techniques as in Theorem \ref{1st theorem} and Gromov's h-principles for contact immersion and isocontact embedding (see \ref{h-principle for contact immersion} and \ref{h-principle for contact embedding}) we prove the following result for parallelizable manifolds.

\begin{theorem} \label{2nd theorem}
	Let $M^{2n+1}$ be a parallelizable manifold.
	\begin{enumerate}
		\item For any contact structure $\xi$ on $M^{2n+1}$, $(M^{2n+1},\xi)$ contact immerses in $(\mathbb{R}^{2n+3},\xi_{std})$.  
		\item If $M^{2n+1}$ is $5$-connected, then for $n \equiv 0,1\ (mod \ 4)$ and $n \geq 7$, any contact structure $\xi$, $(M^{2n+1},\xi)$ has an isocontact embedding in $(\mathbb{R}^{4n-3},\xi_{std})$.
	\end{enumerate}   
\end{theorem}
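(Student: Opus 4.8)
The plan is to prove both statements by the scheme used for Theorem \ref{1st theorem}: first produce a genuine smooth immersion (resp. embedding), then reduce the contact refinement to a purely bundle-theoretic ``formal'' problem by invoking Gromov's h-principles (\ref{h-principle for contact immersion} and \ref{h-principle for contact embedding}), and finally solve that formal problem by obstruction theory, using heavily that $TM$ is trivial.

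For statement $(1)$, since $M^{2n+1}$ is parallelizable its stable normal bundle is trivial, so by the Hirsch immersion theorem $M$ immerses in $\R^{2n+3}$. To upgrade this to a contact immersion I would construct a formal solution, i.e. a fibrewise monomorphism $F\colon TM \too \varepsilon^{2n+3}_M(\mathbb{R})$ covering the immersion whose image meets $\xi_{std}$ transversally in a conformally symplectic hyperplane, so that $F$ restricted to $\xi$ is a complex monomorphism into $\xi_{std}$. Because $\R^{2n+3}$ is contractible, $\xi_{std}$ restricts over the image to the trivial complex bundle $\varepsilon^{n+1}_M(\C)$, so the formal data amounts to a complex monomorphism $\xi \inj \varepsilon^{n+1}_M(\C)$; the global trivialisation $TM\iso \varepsilon^{2n+1}_M(\mathbb R)$ lets me write down such a monomorphism pointwise from the parallelization, the contact (nondegeneracy) condition being an open, generically satisfied condition on the chosen $(2n+1)$-plane field that the parallelization makes globally coherent. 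Gromov's h-principle for contact immersions then yields the genuine contact immersion. This case is comparatively soft, since the codimension is only $2$ and immersions obey a full h-principle.

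For statement $(2)$ the smooth part is supplied by the Haefliger--Hirsch theorem: a $5$-connected (hence $4$-connected) $(2n+1)$-manifold embeds in $\R^{2(2n+1)-4-1}=\R^{4n-3}$. The codimension is $2n-4=2(n-2)\geq 10$ and the target has $2n-2\geq n+2$ complex dimensions, so Gromov's h-principle for isocontact embeddings (\ref{h-principle for contact embedding}) applies and reduces the problem to a formal isocontact embedding: a complex structure on the smooth normal bundle $\nu$ (of real rank $2(n-2)$) realising it as the symplectic normal bundle, together with a splitting $\xi \oplus \nu \iso \varepsilon^{2n-2}_M(\C)$ extending to an almost contact structure on a tubular neighbourhood. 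Since $TM$ is trivial, $\nu$ is stably trivial, and the task is to (a) equip $\nu$ with the required complex structure, and (b) match the complex structures so that $\xi \oplus \nu$ is complex-trivial. I would run these as successive obstruction-theory problems over $M$, whose obstructions lie in groups $H^{i+1}(M;\pi_i(\SF))$ for the relevant fibres $\SF$ (complex Stiefel manifolds and $SO/U$), exactly as in the proof of Theorem \ref{1st theorem}.

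The main obstacle is step (b): the obstructions to the complex trivialisation are governed, via Bott periodicity, by the residues of the relevant degrees modulo $8$, and this is precisely where the hypotheses enter. Five-connectedness kills all low-degree obstruction groups $H^{i+1}(M)$, while the congruence $n\equiv 0,1\ (mod\ 4)$ together with $n\geq 7$ forces the surviving coefficient groups $\pi_i(\SF)$ to vanish in the required range, so that every obstruction cocycle is null; this is the analogue in the present setting of the mod-$8$ conditions appearing in Theorem \ref{1st theorem} and Corollary \ref{1st corollary}. Once the formal isocontact embedding is assembled, \ref{h-principle for contact embedding} produces the genuine isocontact embedding, with trivial symplectic normal bundle, completing the proof.
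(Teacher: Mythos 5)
Your part $(1)$ matches the paper's argument: immerse $M$ smoothly in $\mathbb{R}^{2n+3}$, use the triviality of $TM$ (equivalently $\xi\oplus\varepsilon^1_M(\mathbb{C})\cong\varepsilon^{n+1}_M(\mathbb{C})$, so the inclusion $\xi\hookrightarrow\xi\oplus\varepsilon^1_M(\mathbb{C})$ already supplies the contact monomorphism) and apply Theorem \ref{h-principle for contact immersion}. For part $(2)$ your smooth input (Theorem \ref{haefligar-hirsch}) and the appeal to the closed case of Theorem \ref{h-principle for contact embedding} are also as in the paper.

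However, in part $(2)$ there is a genuine gap in how you set up and resolve the formal problem. Theorem \ref{h-principle for contact embedding} asks for a homotopy \emph{through monomorphisms $TM\to T\mathbb{R}^{4n-3}$ covering $f$} from $Df$ to a contact monomorphism; the obstructions therefore live in $H^{i}(M;\pi_{i}(V_{4n-3,2n+1}))$ for the \emph{real} Stiefel manifold $V_{4n-3,2n+1}$, not in groups of the form $H^{i+1}(M;\pi_i(SO/U))$ or of complex Stiefel manifolds as in your steps (a) and (b). Five-connectedness of $M$ together with the $(2n-5)$-connectedness of $V_{4n-3,2n+1}$ (via the retraction of $M\setminus\mathbb{D}^{2n+1}$ onto its $(2n-5)$-skeleton) kills everything except the single top obstruction in $H^{2n+1}(M;\pi_{2n+1}(V_{4n-3,2n+1}))$. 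The vanishing of $\pi_{2n+1}(V_{4n-3,2n+1})$ for $n\equiv 0,1\ (mod\ 4)$ is an \emph{unstable} homotopy computation due to Hoo and Mahowald \cite{HM}; it is not a consequence of Bott periodicity, and the ``mod $8$'' mechanism you invoke (which governs $\pi_*(SO)$ and $\pi_*(\Gamma_{N+1})$ in Theorem \ref{1st theorem}) does not apply here — note the hypothesis is a congruence mod $4$, not mod $8$. As written, your argument has no means of showing the decisive top-dimensional obstruction vanishes, so the proof of $(2)$ is incomplete without replacing the Bott-periodicity claim by the correct identification of the obstruction group and the citation of \cite{HM}.
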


\begin{corollary} \label{2nd corollary}
	Let $M^{2n+1} = N^{2n-1} \times (S^1 \times S^1)$. Where $N^{2n-1}$ is a $\pi$--manifold that embeds in $\mathbb{R}^{2N+1}$ with trivial normal bundle. Then $M^{2n+1}$ contact embeds in $(\mathbb{R}^{2N+5},\xi_{std})$. 
\end{corollary}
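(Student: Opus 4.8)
The plan is to produce a smooth embedding of $M = N\times(S^1\times S^1)$ into $\R^{2N+5}$ with trivial normal bundle, to observe that $M$ is parallelizable, and then to feed the resulting (unobstructed) formal data into Gromov's h-principle for isocontact embeddings in order to manufacture simultaneously a contact structure $\xi_0$ on $M$ and an isocontact embedding of $(M,\xi_0)$ into $(\R^{2N+5},\xi_{std})$. The whole point is that the product structure makes every bundle in sight trivial, so that the obstruction theory underlying the h-principle is vacuous.

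First I would build the smooth embedding. Let $f\colon N\hookrightarrow\R^{2N+1}$ be the given embedding with trivial normal bundle, and let $j\colon S^1\times S^1\hookrightarrow\R^4$ be the standard embedding of the torus as a product of two round circles, whose normal bundle is trivial of rank $2$. Writing $\R^{2N+5}=\R^{2N+1}\times\R^4$, the product $\Phi=f\times j$ is a smooth embedding of $M$ with normal bundle $\nu_\Phi\iso\pi_N^*\nu_f\oplus\pi_{T^2}^*\nu_j$, which is trivial of even rank $2N-2n+4$. Next I would record that $M$ is parallelizable: since $T\bigl(N\times(S^1\times S^1)\bigr)\iso\pi_N^*TN\oplus\pi_{T^2}^*T(S^1\times S^1)$, and $T(S^1\times S^1)$ is trivial of rank $2$ while $TN\oplus\varepsilon^1_N$ is trivial, we get $TM\iso\pi_N^*\bigl(TN\oplus\varepsilon^1_N\bigr)\oplus\varepsilon^1_M$, a trivial bundle. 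In particular $T\R^{2N+5}|_M\iso TM\oplus\nu_\Phi$ is trivial as well.

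With the embedding in hand, I would assemble a formal isocontact embedding into $(\R^{2N+5},\xi_{std})$. Because $2N+5=2(N+2)+1$ with $N\ge n$ (a closed $(2n-1)$--manifold cannot embed in $\R^{2n-1}$, which forces $N\ge n$), the codimension is $2(N-n+2)\ge 4$, so we are in the range where an isocontact embedding exists as soon as a formal one does. Since $M$ is parallelizable it carries an almost contact structure, i.e.\ a corank-one subbundle $\xi\subset TM$ with a complex structure and $TM/\xi$ trivial; taking the standard $\C^n\oplus\R$ model makes $\xi$ a trivial complex bundle, so $c_i(\xi)=0$ and the Chern-class condition of Remark \ref{remark on chern class and normal bundle} for a trivial symplectic normal bundle holds. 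Equipping the trivial even-rank bundle $\nu_\Phi$ with a complex structure, the bundles $\xi$, $\nu_\Phi$ and $\xi_{std}|_{\Phi(M)}$ are all trivial complex bundles with $\rk_\C\xi+\rk_\C\nu_\Phi=\rk_\C\xi_{std}$, so there is a complex bundle isomorphism $\xi\oplus\nu_\Phi\iso\xi_{std}|_{\Phi(M)}$ covering $\Phi$; matching the transverse (Reeb) line, this constitutes the required formal isocontact embedding with trivial conformal symplectic normal bundle $\nu_\Phi$. Gromov's h-principle for isocontact embeddings (Theorem \ref{h-principle for contact embedding}) then promotes this formal solution to a genuine contact structure $\xi_0$ on $M$ and an isocontact embedding of $(M,\xi_0)$ into $(\R^{2N+5},\xi_{std})$ with trivial conformal symplectic normal bundle, which is exactly the assertion that $M$ contact embeds in $(\R^{2N+5},\xi_{std})$.

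The step I expect to be the main obstacle is the bookkeeping at the formal stage: one must check that the almost contact structure on $M$ and the complex structure on $\nu_\Phi$ can be chosen compatibly with the trivialization of $T\R^{2N+5}|_M$, so that together they genuinely assemble into a formal isocontact embedding in the precise sense demanded by Theorem \ref{h-principle for contact embedding}. Once the codimension is at least $4$ and all the relevant bundles are trivial there is no cohomological obstruction to this, and the only remaining care is that the target into which we embed is the fixed standard structure $\xi_{std}$ rather than some auxiliary contact structure built along the way; this is automatic here, since the h-principle is applied with the target $(\R^{2N+5},\xi_{std})$ fixed throughout.
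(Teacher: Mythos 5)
Your smooth embedding $\Phi = f\times j\colon N\times (S^1\times S^1)\hookrightarrow \mathbb{R}^{2N+1}\times\mathbb{R}^4$ with trivial normal bundle and the observation that $M$ is parallelizable are both fine, but the last step --- feeding the formal data directly into Theorem \ref{h-principle for contact embedding} with target $(\mathbb{R}^{2N+5},\xi_{std})$ --- has a genuine gap. That theorem requires the differential $D\Phi$ to be homotopic, \emph{through monomorphisms $TM\to T\mathbb{R}^{2N+5}$ covering $\Phi$}, to the contact monomorphism you build from the isomorphism $\xi\oplus\nu_\Phi\cong\xi_{std}|_{\Phi(M)}$. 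As the paper explains in the paragraph ``From contact immersion to contact embedding,'' the obstructions to such a homotopy live in $H^i(M;\pi_i(V_{2N+5,2n+1}))$ for $1\le i\le 2n+1$; the Stiefel manifold $V_{2N+5,2n+1}$ is only $(2N-2n+3)$-connected, and since the hypothesis allows $N$ to be as small as $n$, these groups can be nonzero throughout the range $2N-2n+4\le i\le 2n+1$. Triviality of $TM$, $\nu_\Phi$ and $\xi$ does not help: it only identifies the two sections with two maps $M\to V_{2N+5,2n+1}$, and gives no reason for those maps to be homotopic; moreover $M=N\times S^1\times S^1$ is not even simply connected, so none of the connectivity arguments the paper uses elsewhere (e.g.\ in Theorem \ref{2nd theorem}(2), where such obstructions are checked explicitly via \cite{HM}) are available. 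Your sentence ``once the codimension is at least $4$ and all the relevant bundles are trivial there is no cohomological obstruction'' is exactly the unproved claim. (A smaller, fixable point: Theorem \ref{h-principle for contact embedding} takes a genuine contact structure on $M$ as input and does not produce one; you would first need \cite{BEM} to realize your almost contact structure by an actual contact structure.)

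The paper's proof avoids this obstruction entirely and is worth comparing. It uses the standard open book decomposition $\mathbb{R}^{m}\setminus(\mathbb{R}^{m-2}\times\{0\})\cong\mathbb{R}^{m-1}\times S^1$ to embed $M\times S^1$ using only \emph{one} extra ambient dimension per circle factor, so $N\times S^1\times S^1$ lands in $\mathbb{R}^{2N+3}$ with trivial normal bundle; Lemma $3.1$ then puts an explicit contact form $\alpha+\sum r_i^2d\theta_i$ on the tubular neighborhood, extends the almost contact structure over all of $\mathbb{R}^{2N+3}$ (adjusting the homotopy class of $\xi_0$ so that the extension obstructions vanish), invokes Gromov's existence h-principle on the open manifold $\mathbb{R}^{2N+3}$ to get $(M,\xi_0)\subset(\mathbb{R}^{2N+3},\eta_0)$, and only then embeds $(\mathbb{R}^{2N+3},\eta_0)$ isocontactly in $(\mathbb{R}^{2N+5},\xi_{std})$. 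In that scheme the embedding of $M$ is tautologically contact from the outset, so no comparison between the differential of a fixed smooth embedding and a contact monomorphism is ever required. Your product embedding costs two extra dimensions (codimension $2(N-n)+4$ rather than $2(N-n)+2$), which is precisely why you are forced to run the h-principle inside the final target dimension and hence to confront the unverified formal homotopy. To repair your argument along the paper's lines, replace $T^2\subset\mathbb{R}^4$ by the open book trick and then quote Lemma $3.1$.
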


In \cite{BEM}, S. Borman, Y. Eliashberg and E. Murphy defined the notion of an overtwisted contact ball in all dimensions. Any contact structure that admits a contact embedding of such an overtwisted ball is called an overtwisted contact structure. These contact structures were shown to satisfy the h-principle for homotopy of contact structures. For details see Theorem $\ref{homotopy of contact structure bem}$. Using this, we prove a uniqueness result for embedding of certain $\pi$-manifolds in an overtwisted contact structure $\eta_{ot}$ on $\mathbb{R}^{2N+1}$, analogous to Theorem $1.25$ in \cite{EF}. 

\begin{theorem}\label{3rd theorem} Let $(M^{8k+3},\xi)$ be a contact $\pi$--manifold such that $H_i(M;\mathbb{Z}) = 0$, for $i \equiv \ 2,4,5,6 \ (mod \ 8)$. Let $\iota_1,\iota_2 : (M^{8k+3},\xi) \rightarrow (\mathbb{R}^{2N+1},\eta_{ot})$ be two isocontact embeddings with trivial conformal symplectic normal bundle such that both the complements of $\iota_1(M)$ and $\iota_2(M)$ in $(\mathbb{R}^{2N+1},\eta_{ot})$ are overtwisted. If $\iota_1$ and $\iota_2$ are smoothly isotopic, then there is a contactomorphism $\chi : (\mathbb{R}^{2N+1},\eta_{ot}) \rightarrow (\mathbb{R}^{2N+1},\eta_{ot})$ such that $\chi \cdot \iota_1 = \iota_2$. 
\end{theorem}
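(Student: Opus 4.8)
The plan is to promote the given smooth isotopy to an ambient contactomorphism by comparing, on $\R^{2N+1}$, the structure $\eta_{ot}$ with the pullback of $\eta_{ot}$ under a smooth ambient isotopy carrying $\iota_1$ to $\iota_2$, and then to apply the relative $h$-principle for overtwisted contact structures (Theorem \ref{homotopy of contact structure bem}). First I would use the smooth isotopy extension theorem to produce a compactly supported diffeomorphism $\Phi$ of $\R^{2N+1}$, smoothly isotopic to the identity, with $\Phi\circ\iota_1=\iota_2$. Putting $\zeta:=\Phi^{*}\eta_{ot}$, the map $\Phi:(\R^{2N+1},\zeta)\to(\R^{2N+1},\eta_{ot})$ is a contactomorphism and $\iota_1$ becomes an isocontact embedding of $(M,\xi)$ into $(\R^{2N+1},\zeta)$ with trivial conformal symplectic normal bundle; by hypothesis $\iota_1$ is also isocontact into $(\R^{2N+1},\eta_{ot})$ with trivial conformal symplectic normal bundle. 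Hence $\eta_{ot}$ and $\zeta$ induce the same hyperplane field $\xi$ along $\iota_1(M)$ and carry the same (trivial) symplectic normal data, so the neighbourhood theorem for contact submanifolds (\cite{Ge}, Chapter $6$) lets us, after a diffeomorphism supported near and fixing $\iota_1(M)$, assume $\zeta=\eta_{ot}$ on an open neighbourhood $\mathcal{N}$ of $\iota_1(M)$. Both structures are overtwisted on $\R^{2N+1}\setminus\mathcal{N}$: for $\eta_{ot}$ this is the assumption on the complement of $\iota_1(M)$, and for $\zeta=\Phi^{*}\eta_{ot}$ it follows from that on the complement of $\iota_2(M)=\Phi(\iota_1(M))$.

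By Theorem \ref{homotopy of contact structure bem}, two overtwisted contact structures that agree on $\mathcal{N}$ and are overtwisted on its complement are isotopic through contact structures rel $\mathcal{N}$ provided they are homotopic as almost contact structures rel $\mathcal{N}$. It therefore suffices to build a homotopy of almost contact structures from $\zeta$ to $\eta_{ot}$ that is stationary on $\mathcal{N}$. Almost contact structures on $\R^{2N+1}$ are sections of a bundle with fibre $F=SO(2N+2)/U(N+1)$; since $\R^{2N+1}$ is contractible and $\mathcal{N}$ deformation retracts to $M$, relative obstruction theory places the obstructions to such a rel-$\mathcal{N}$ homotopy in the groups $H^{\,j-1}(M;\pi_j(F))$, $j\ge 1$.

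In the relevant range $F$ agrees with the stable space $SO/U$, whose homotopy is $8$-periodic with $\pi_j(SO/U)=\Z$ for $j\equiv 2,6\ (\mathrm{mod}\ 8)$, $\pi_j(SO/U)=\Z/2$ for $j\equiv 7,0\ (\mathrm{mod}\ 8)$, and $\pi_j(SO/U)=0$ otherwise. Thus the only obstruction groups that can be nonzero are $H^{\,i}(M;\pi_{i+1}(F))$ with $i\equiv 1,5,6,7\ (\mathrm{mod}\ 8)$. As $M^{8k+3}$ is closed and oriented, Poincaré duality gives $H^{i}(M;\Z)\cong H_{8k+3-i}(M;\Z)$, and the residues $i\equiv 1,5,6,7\ (\mathrm{mod}\ 8)$ transform into $8k+3-i\equiv 2,6,5,4\ (\mathrm{mod}\ 8)$. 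Consequently the hypothesis $H_i(M;\Z)=0$ for $i\equiv 2,4,5,6\ (\mathrm{mod}\ 8)$ forces every one of these obstruction groups to vanish, the $\Z/2$-coefficient groups being deduced from the same integral vanishing through the universal coefficient theorem. Hence the required homotopy of almost contact structures rel $\mathcal{N}$ exists.

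Feeding this formal homotopy into Theorem \ref{homotopy of contact structure bem} and applying Gray stability then produces an isotopy $\psi_t$ of $\R^{2N+1}$, fixing $\mathcal{N}$ pointwise, with $\psi_0=\mathrm{id}$ and $\psi_1^{*}\eta_{ot}=\zeta$; in particular $\psi_1:(\R^{2N+1},\zeta)\to(\R^{2N+1},\eta_{ot})$ is a contactomorphism with $\psi_1\circ\iota_1=\iota_1$. Setting $\chi:=\Phi\circ\psi_1^{-1}$ gives a contactomorphism of $(\R^{2N+1},\eta_{ot})$ with $\chi\circ\iota_1=\Phi\circ\iota_1=\iota_2$, which is the assertion. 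The main obstacle is the middle step: pinning down the fibre homotopy of $SO/U$ and matching it, through Poincaré duality and the universal coefficient theorem, against precisely the stated homology vanishing, while guaranteeing that the almost-contact homotopy can be kept stationary on $\mathcal{N}$ so that the relative Borman--Eliashberg--Murphy $h$-principle applies.
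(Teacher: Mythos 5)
Your proposal is correct and follows essentially the same route as the paper: extend the smooth isotopy to an ambient diffeomorphism, use the contact neighbourhood theorem for submanifolds with trivial conformal symplectic normal bundle to make the two contact structures agree near the image, locate the relative obstructions in $H^{i-1}(M;\pi_i(SO/U))$ and kill them via Bott periodicity, Poincar\'e duality and the homology hypothesis, and finish with the relative Borman--Eliashberg--Murphy $h$-principle and Gray stability. The only differences (pulling back $\eta_{ot}$ and working near $\iota_1(M)$ rather than pushing forward and working near $\iota_2(M)$) are immaterial.
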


For example, any two isocontact embeddings of $(S^{8k_1} \times S^{8k_2+3},\xi_{0})$ in $(\mathbb{R}^{8k_1+8k_2+5},\eta_{ot})$ which satisfy the hypothesis of Theorem $\ref{3rd theorem}$, are equivalent. 
  
 \
 
We would like to mention that the problem of isocontact embedding of $3$-manifolds in $\mathbb{R}^5$ has seen much development in the past few years. The first result in this direction was given by Kasuya. In \cite{Ka2}, he proved that given a contact $3$-manifold $(M,\xi)$, the first Chern class $c_1(\xi)$ is the only obstruction to isocontact embedding of $(M,\xi)$ in some contact structure on $\mathbb{R}^5$. The approach in \cite{Ka2} was a motivation for the present article. Afterwards, Etnyre and Furukawa \cite{EF} showed that a large class of contact $3$-manifolds embed in $(\mathbb{R}^5,\xi_{std})$. Recently, the existence and uniqueness question for co-dimension $2$ isocontact embedding has been completely answered by the works of   Pancholi and Pandit \cite{PP}, Casals, Pancholi and Presas \cite{CPP}, Casals and Etnyre \cite{CE} and Honda and Huang \cite{HoH}. On the other hand explicit examples of co-dimension $2$ isocontact embeddings were produced in the works of Casals and Murphy \cite{CM}, Etnyre and Lekili \cite{EL} and in \cite{S}.

\subsection{Acknowledgment} The author is grateful to Dishant M. Pancholi for his help and support during this work. He would like to thank Suhas Pandit for reading the first draft of this note and for his helpful comments. He also thanks John Etnyre for clarifying some doubts regarding the proof of Theorem $1.25$ in \cite{EF}. Finally, he thanks the referee for various comments and suggestions which helped improve the article. The author is supported by the National Board of Higher Mathematics, DAE, Govt. of India.
 	   
\section{preliminaries}

In this section we review some basic notions and results that will be relevant to us.

\subsection{h-principle for immersion} 

Let $f,g: M^{2n+1} \looparrowright V^{2N+1}$ be two immersions. We say that $f$ is \textit{regularly homotopic} to $g$, if there is a family $h_t: M \looparrowright V$ of immersions joining $f$ and $g$. Being an immersion, $f$ induces $Df : TM \rightarrow TV$ such that for all $p \in M$, $Df$ restricts to a monomorphism $Df_p$ from $T_pM$ to $T_{f(p)}V$.  

\begin{definition}[Formal immersion]
A formal immersion of $M$ in $V$ is a bundle map $F:TM \rightarrow TV$ that restricts to a monomorphism $F_p$ on each tangent space $T_pM$, for $p\in M$. It can be represented by the following diagram, where $f_0$ is any smooth map making it commutative.

 \begin{equation*}
 \begin{CD}
 TM   @>F>>  TV\\
 @VV\pi_1V        @VV\pi_2V\\
 M     @>f_0>>  V
 \end{CD}
 \end{equation*}

\end{definition} 

We say that $F$ is a formal immersion covering $f_0$. The existence of a formal immersion from $TM$ in $TV$ is a necessary condition for the existence of an immersion of $M$ in $V$. 

\begin{definition}[Homotopy between formal immersions]
	 Two formal immersions, $F$ and $G$ are called formally homotopic (or just homotopic) if there is a homotopy $H_t: TM \rightarrow TV$ of formal immersions such that $H_0 = F$ and $H_1 = G$.
 \end{definition}

Two immersions $f$ and $g$ are called formally homotopic if $Df$ and $Dg$ are homotopic as formal immersions. To be precise there exists a formal homotopy $H_t$ covering a smooth homotopy $f_t$ joining $f$ and $g$ such that $H_0 = Df$ and $H_1 = Dg$. Assume that $dim(V) \geq dim(M) + 1$. Let $Imm(M,V)$ denote the set of all immersions of $M$ in $V$ and let $Mono(TM,TV)$ denote the set of all formal immersions. Let $I : Imm(M,V) \rightarrow Mono(TM,TV)$ be the inclusion map given by the tangent bundle monomorphism induced by an immersion. 

\begin{theorem}[The Smale-Hirsch h-principle for immersion](\cite{Hi})
	  The map $I$ is a homotopy equivalence.
\end{theorem}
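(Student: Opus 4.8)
The plan is to exhibit $I$ as the comparison map between genuine and formal solutions of an open, $\Diff(M)$-invariant differential relation, and then to invoke the h-principle for such relations. First I would set up the jet-theoretic reformulation. Encode an immersion $f:M\looparrowright V$ by its $1$-jet $j^1f$, a section of the jet bundle $J^1(M,V)\to M$, and let $\mathcal{R}\subset J^1(M,V)$ consist of those jets whose linear part is injective. Since $\dim V\geq\dim M+1$, injectivity of a linear map is an open condition, so $\mathcal{R}$ is an open subset of $J^1(M,V)$; moreover it is invariant under reparametrization of the source, i.e.\ under the pseudogroup of local diffeomorphisms of $M$. Genuine immersions are exactly the holonomic sections of $\mathcal{R}$, whereas a formal solution is a section recording a base map $f_0$ together with a fiberwise injective linear map $TM\to f_0^*TV$; this is precisely an element of $Mono(TM,TV)$. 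Under these identifications $I:Imm(M,V)\to Mono(TM,TV)$ becomes the inclusion of holonomic solutions into formal solutions, and the claim that $I$ is a homotopy equivalence is exactly the (parametric) h-principle for $\mathcal{R}$.

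Next I would prove the h-principle. The efficient engine is the holonomic approximation theorem: an arbitrary section of $J^1(M,V)$ can be $C^0$-approximated by a holonomic section over a small perturbation of any subcomplex of positive codimension. Together with $\Diff(M)$-invariance this immediately yields the full parametric h-principle whenever $M$ is \emph{open}, because an open manifold deformation retracts onto such a subcomplex and the approximation can be transported back along the retracting isotopy. Running the same argument with a parameter disk $D^k$ attached approximates families of formal immersions by families of genuine immersions, which shows that $I_*$ is bijective on every homotopy group. Since the function spaces in question have the homotopy type of CW complexes, this weak homotopy equivalence upgrades to the asserted homotopy equivalence.

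The hard part will be that $M$ is \emph{closed}: it does not retract onto a positive-codimension subcomplex, so the holonomic approximation argument fails over the top-dimensional cells, and this is where the genuine geometric input of Smale and Hirsch is required. The way around it is that the sheaf of immersions on $M$ is not merely $\Diff(M)$-invariant but also \emph{microflexible}: a homotopy of immersions defined near a compact set can be extended, for a short time, to a slightly smaller set. This microflexibility is exactly the content of the covering homotopy theorem of \cite{Hi}, and it is where the codimension hypothesis $\dim V\geq\dim M+1$ together with Smale's analysis of immersions of disks is spent. Granting it, Gromov's theorem that a microflexible $\Diff(M)$-invariant sheaf is flexible supplies the parametric, relative, $C^0$-dense h-principle over arbitrary manifolds, closed ones included; concretely one may instead organize the passage as an induction over a handle decomposition of $M$, treating the positive-codimension handles by the open case and using the covering homotopy to push the formal immersion to a genuine one across the remaining handles. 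Establishing microflexibility is the crux; everything else is formal.
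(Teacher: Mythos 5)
First, a remark on the comparison you were asked for: the paper contains no proof of this statement at all --- it is quoted from Hirsch's paper \cite{Hi} and used as a black box --- so your argument can only be judged as a proof of the Smale--Hirsch theorem itself, not against anything internal to the paper.

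Your architecture (jet-bundle reformulation, holonomic approximation plus $\Diff(M)$-invariance for the open case, handle induction for the closed case) is the correct modern skeleton, but the closed case, which you correctly identify as the crux, contains a genuine error. Microflexibility is \emph{not} where the hypothesis $\dim V \geq \dim M + 1$ is spent: the sheaf of solutions of any \emph{open} differential relation is automatically microflexible, so the immersion sheaf is microflexible even in codimension zero (likewise, injectivity of a linear map is an open condition whenever $\dim V \geq \dim M$, not just in positive codimension). And Gromov's theorem that a microflexible $\Diff$-invariant sheaf satisfies the parametric h-principle is a theorem about \emph{open} manifolds only; it does not hold ``over arbitrary manifolds, closed ones included'' as you assert. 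If it did, your argument would prove the h-principle for equidimensional immersions of closed manifolds, which is false: $T^2$ is parallelizable, so formal immersions $TT^2 \rightarrow T\mathbb{R}^2$ exist, yet no closed $n$-manifold immerses in $\mathbb{R}^n$ (such a map would be open with compact image). The genuine input of Smale and Hirsch is strictly stronger than microflexibility: it is the covering homotopy theorem that the restriction map $Imm(D^n,V) \rightarrow Imm(Op(\partial D^n),V)$, and its parametric versions, is a Serre fibration --- full flexibility over a disk, not short-time lifting --- and it is exactly here that $\dim V > \dim M$ is used. Your parenthetical alternative (handle induction, using the covering homotopy theorem to cross the top-dimensional handles) is the right argument; as written, however, you have tried to justify the closed case by an inapplicable general theorem rather than by the fibration property that actually has to be established.
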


So, $I$ induces set bijection from $\pi_0(Imm(M,V))$ to $\pi_0(Mon(TM,TV))$. This implies that the existence of a formal immersion is also sufficient for the existence of an immersion. Moreover, the isomorphism that $I$ induces from $\pi_1(Imm(M,V))$ to $\pi_1(Mon(TM,TV))$ implies that if $f_0$ and $f_1$ are two immersions which are formally homotopic, then they are regularly homotopic. 

    \

%Let us briefly recall the notion of an associated fiber bundle.

%\begin{definition} [Associated bundle] Let $\Pi : P \rightarrow B$ be a principal $G$-bundle and let $\rho : G \rightarrow Homeo(Fb)$ be a continuous left action of $G$ on a space $Fb$. Consider the right $G$-action on $P \times Fb$ given by $(p,f) \cdot g = (p \cdot g, \rho(g^{-1}) \cdot f)$. The quotient by this action is a fiber bundle with fiber $Fb$ that we call the associated $Fb$-bundle of $P$. The transition functions of this bundle are given by the images of the transition functions of $P$ under $\rho$. It is well known that every smooth rank $n$ vector bundle $E$ can be identified with a unique principle $SO(n)$-bundle. By an associated $Fb$-bundle of $E$, we understand the associated $Fb$-bundle of this principle $SO(n)$-bundle.	     
%\end{definition}
 
We now discuss the obstruction theoretic problem for the existence of a formal immersion and the classification of formal immersions. For more details on immersion theory we refer to \cite{Hi}. For related terminologies and notions from fiber bundle and obstruction theory and we refer to \cite{St}.        \\

\textbf{Obstructions to formal immersion and homotopy:} Given a manifold $N$, $T_kN$ will denote the bundle of $k$-frames associated to $TN$. A formal immersion of $M^n$ into $\mathbb{R}^N$ defines an $SO(n)$--equivariant map from $T_nM$ to $V_{N,n}$. Here, $V_{N,n}$ denotes the real Stiefel manifold consisting of all oriented $n$-frames in $\mathbb{R}^N$. According to \cite{Hi}, the homotopy classes of the formal immersions are in one-one correspondence with the homotopy classes of such $SO(n)$--equivariant maps to $V_{N,n}$, i.e., with homotopy classes of cross sections of the associated bundle of $T_nM$ with fiber $V_{N,n}$. Thus, the problem is reduced to looking at the obstructions to the existence of a section $s$ of this associated bundle. Such obstructions lie in the groups $H^i(M^n;\pi_{i-1}(V_{N,n}))$, for $1 \leq i \leq n$.

Moreover, two formal immersions $F$ and $G$ are homotopic if and only if their corresponding sections $s_F$ and $s_G$ to the associated $V_{N,n}$--bundle are homotopic. Thus, the homotopy obstructions between two formal immersions $F$ and $G$ lie in $H^i(M^n;\pi_i(V_{N,n}))$ for $1 \leq i \leq n$.  

\subsection{Almost contact structure}
 We recall the notion of almost contact structure. 
 
\begin{definition}
 Consider a real vector bundle $p: E \rightarrow B$ of rank $2m$. An almost complex structure on $E$ is a smooth assignment of automorphisms $J_p : F_p \rightarrow F_p$ for all point $p$ in $B$ such that $J_p^2=-Id$. 	
\end{definition}

The set of all complex structures on $\mathbb{R}^{2m}$ is homeomorphic to $\Gamma_m=SO(2m)/U(m)$ (\cite{Ge}, Lemma $8.1.7$). Thus, the existence of an almost complex structure on $E$ is equivalent to the existence of a section of the associated $\Gamma_m$-bundle. 

\begin{definition} An almost contact structure on an odd dimensional manifold $N^{2n+1}$ is an almost complex structure on its stable tangent bundle $TN\oplus\varepsilon^1_N$.
\end{definition}

Thus, an almost contact structure on $N$ is an almost complex structure on $N \times \mathbb{R}$. So every almost contact structure on $N$ is given by a section of the associated $\Gamma_{n+1}$--bundle of $T(N\times\mathbb{R})$. 

%If $J$ is such an almost complex structure on $T(N\times\mathbb{R})$, then $TN \cap J(TN)$ gives a hyperplane distribution $\bar{\xi}$ on $N\times \{0\}$ which is invariant under $J$. An almost contact structure on $N$ is contact when $\bar{\xi}$ is contact.

\begin{definition} Two almost contact structures are said to be in the same homotopy class, if their corresponding sections to the associated $\Gamma_{n+1}$--bundle are homotopic. 
\end{definition}

The existence of an almost contact structure on $N$ is a necessary condition for the existence of a contact structure. For open manifolds, Gromov (\cite{Gr}) proved the following h-principle showing that this condition is also sufficient. 

\begin{theorem} (Gromov, \cite{Gr}) \label{existence of contact structure gromov}
	Let $K$ be a sub-complex of an open manifold $V$. Let $\bar{\xi}$ be an almost contact structure on $V$ which restricts to a contact structure in a neighborhood $Op(K)$ of $K$. Then one can homotope $\bar{\xi}$, relative to $Op(K)$, to a contact structure $\xi$ on $V$.    
\end{theorem}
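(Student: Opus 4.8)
The plan is to realize Gromov's theorem as an instance of the general h-principle for \emph{open}, $\Diff$-invariant differential relations over open manifolds, via holonomic approximation. First I would reformulate the contact condition as a first-order differential relation on $1$-forms. Working with sections of $T^*V$, let $\mathcal{R} \subset J^1(T^*V)$ be the subset of $1$-jets $j^1_p\alpha$ satisfying $\alpha \wedge (d\alpha)^n \neq 0$ at $p$. Since $\alpha \wedge (d\alpha)^n$ is a polynomial (hence continuous) function of the $1$-jet and the constraint is a non-vanishing condition, $\mathcal{R}$ is \emph{open}; since $d$ and the wedge product commute with pullback by diffeomorphisms, $\mathcal{R}$ is $\Diff$-invariant. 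A holonomic section of $\mathcal{R}$ is exactly a genuine contact form, while a \emph{formal} solution is a nowhere-vanishing $1$-form $\alpha$ together with a $2$-form $\beta$ (the formal stand-in for $d\alpha$) with $\alpha \wedge \beta^n \neq 0$. The second step is the dictionary: such a pair $(\alpha,\beta)$ is the same data as a cooriented hyperplane field $\xi = \ker\alpha$ equipped with a conformal symplectic form $\beta|_\xi$, i.e. an almost contact structure; the symmetric part of the $1$-jet that $\mathcal{R}$ does not constrain is a contractible choice, so homotopy classes of formal solutions of $\mathcal{R}$ correspond bijectively to homotopy classes of almost contact structures. Under this dictionary the hypothesis that $\bar\xi$ is genuinely contact on $Op(K)$ says precisely that the corresponding formal solution is already holonomic over $Op(K)$.

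With this in hand, the theorem becomes: every formal solution of $\mathcal{R}$ that is holonomic near $K$ can be homotoped rel $Op(K)$, through formal solutions, to a holonomic one. Because $V$ is an open manifold, it admits a spine, i.e. a polyhedron $P \subset V$ of positive codimension (we may take $\dim P \le \dim V - 1$ and arrange $K \subset P$) onto which $V$ deformation retracts, together with a compressing isotopy $\varphi_t \colon V \to V$, fixing $Op(K)$, that pushes $V$ into an arbitrarily small neighborhood $Op(P)$. I would then apply the Holonomic Approximation Theorem to the formal solution over $Op(P)$: since $P$ has positive codimension, there is a $C^0$-small isotopy $h_t$ of $V$, supported away from $K$, and a genuine $1$-form $\alpha$, defined and holonomic on $Op(h_1(P))$, whose $1$-jet is $C^0$-close to the prescribed formal solution. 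Here the \emph{openness} of $\mathcal{R}$ is essential: $C^0$-proximity of the $1$-jets forces $\alpha \wedge (d\alpha)^n \neq 0$, so $\alpha$ is genuinely contact on $Op(h_1(P))$.

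Finally I would globalize using $\Diff$-invariance. Composing the compression $\varphi_t$ with the approximation isotopy $h_t$ produces a diffeomorphism carrying all of $V$ into the region $Op(h_1(P))$ on which the contact form $\alpha$ lives; pulling $\alpha$ back by this diffeomorphism yields, by $\Diff$-invariance of the contact condition, a contact form on all of $V$. Tracking the construction parametrically produces a homotopy of formal solutions from the original $\bar\xi$ to this holonomic solution, and carrying everything out relative to $Op(K)$ (where nothing is moved) yields the desired homotopy of almost contact structures rel $Op(K)$ whose endpoint is a contact structure. I expect the main obstacle to be the technical core of the previous paragraph: the Holonomic Approximation Theorem itself, together with the careful bookkeeping needed to keep all isotopies and homotopies fixed on $Op(K)$ and to guarantee that the $C^0$-approximation genuinely lands in the open relation $\mathcal{R}$. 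The parametric and relative refinements are conceptually routine but are where the real work lies. As an alternative for the near-$P$ step one can use Gromov's convex integration, which produces the holonomic solution directly over the spine; I would use whichever is more convenient for keeping the relative condition.
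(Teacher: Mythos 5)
The paper does not prove this statement: it is quoted as a known theorem of Gromov with a citation to \cite{Gr}, so there is no in-paper argument to compare yours against. Judged on its own terms, your sketch is the standard modern proof (the one in Eliashberg--Mishachev's book) and the outline is correct: the contact condition $\alpha\wedge(d\alpha)^n\neq 0$ is an open, $\Diff$-invariant first-order relation on $1$-forms; a formal solution is a pair $(\alpha,\beta)$ with $\alpha\wedge\beta^n\neq 0$, which up to the contractible choice of the symmetric part of the jet (and the contractible space of compatible complex structures) is the same as an almost contact structure in the paper's sense; and openness of $V$ lets you compress onto a positive-codimension spine, apply holonomic approximation there, and pull back by the compressing isotopy using $\Diff$-invariance. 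Two points deserve the care you already flag: first, working with global $1$-forms presupposes coorientability, which is harmless here since the paper only considers cooriented structures but would require recasting the relation on hyperplane fields otherwise; second, the relative statement needs the relative form of holonomic approximation (the section is already holonomic on $Op(K)$, so the wiggling isotopy and the connecting homotopy of formal solutions must be supported away from $K$), rather than literally arranging $K\subset P$, since $K$ need not have positive codimension. Note also that Gromov's original argument proceeds by his sheaf-theoretic flexibility/microextension technique rather than holonomic approximation, so your route is the later, more elementary packaging of the same result.
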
 

For closed manifolds, the corresponding h-principle follows from the work of Borman, Eliashberg and Murphy (\cite{BEM}). In particular, they showed that in every homotopy class of an almost contact structure there is at least one contact structure called $overtwisted$ (see \cite{BEM}). Two such overtwisted contact structures are isotopic if and only if they are homotopic as almost contact structures. \cite{BEM} gives a parametric version of Theorem \ref{existence of contact structure gromov} that holds for both open and closed contact manifolds.

\begin{theorem}[Borman, Eliashberg and Murphy, \cite{BEM}] \label{homotopy of contact structure bem}
	Let $K \subset M^{2n+1}$ be a closed subset. Let $\xi_1$ and $\xi_2$ be two overtwisted contact structures on $M$ that agree on some $Op(K)$. If $\xi_1$ and $\xi_2$ are homotopic as almost contact structures over $M \setminus K$, then $\xi_1$ and and $\xi_2$ are homotopic as contact structures relative to $Op(K)$.  
\end{theorem}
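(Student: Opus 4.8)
The plan is to prove Theorem \ref{homotopy of contact structure bem} by reducing the relative parametric statement to the absolute existence statement already available from \cite{BEM}, and by exploiting the defining feature of overtwisted contact structures: that their classification up to isotopy is purely homotopy-theoretic. First I would fix notation. We are given a closed subset $K \subset M^{2n+1}$, two overtwisted contact structures $\xi_1, \xi_2$ that coincide on some open neighborhood $Op(K)$, together with a homotopy $\bar{\xi}_t$ of almost contact structures, $t \in [0,1]$, with $\bar{\xi}_0 = \xi_1$ and $\bar{\xi}_1 = \xi_2$, which is stationary on $Op(K)$ (this is what "homotopic as almost contact structures over $M \setminus K$" means after arranging the homotopy to be constant near $K$). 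The goal is to upgrade $\bar{\xi}_t$ to a genuine path of contact structures, rel $Op(K)$, from $\xi_1$ to $\xi_2$.

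The key reduction is to the parametric h-principle for overtwisted structures. The strategy is to regard the family $\{\xi_1, \xi_2\}$ as the boundary data of a map from $\partial[0,1] = \{0,1\}$ into the space of overtwisted contact structures agreeing with the fixed germ on $Op(K)$, and the almost-contact homotopy $\bar{\xi}_t$ as an extension of that boundary data over all of $[0,1]$ into the space of almost contact structures. The parametric, relative h-principle of Borman–Eliashberg–Murphy asserts precisely that the inclusion of overtwisted contact structures (with fixed germ on $Op(K)$) into almost contact structures (with the same fixed germ) is a weak homotopy equivalence; applied to the pair $([0,1], \partial[0,1])$ this says that any almost-contact family extending a contact family on the boundary can be deformed, rel the boundary and rel $Op(K)$, to a family of honest overtwisted contact structures. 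That deformed family is the desired contact homotopy connecting $\xi_1$ to $\xi_2$. Concretely I would first apply the existence half to replace $\bar{\xi}_t$ by an almost-contact family that is already contact for $t$ near $0$ and $1$ (using that $\xi_1,\xi_2$ are contact), then apply the parametric existence theorem on the interior slices, keeping the germ on $Op(K)$ frozen throughout.

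The step I expect to be the main obstacle is the bookkeeping needed to make the relative and parametric versions match the black-box form of \cite{BEM}. Two technical points must be handled carefully. The first is ensuring the almost-contact homotopy can be taken \emph{genuinely constant}, not merely fixed up to homotopy, on an open neighborhood of $K$: since $\xi_1$ and $\xi_2$ agree only on \emph{some} $Op(K)$, one shrinks to a slightly smaller neighborhood on which both the structures and the connecting family are literally the same contact germ, so that "rel $Op(K)$" is well defined. The second, more serious point is that the overtwisted classification requires the complement $M \setminus Op(K)$ to contain an overtwisted disk compatible with the family; because $\xi_1$ (hence the whole family, the overtwisted property being homotopy-invariant in the sense of \cite{BEM}) is overtwisted, such a disk can be placed away from $K$ and kept fixed during the deformation, which licenses the application of the parametric theorem. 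Once these normalizations are in place, the conclusion is immediate from the weak homotopy equivalence, so the substance of the argument is entirely in arranging the hypotheses to meet those of the cited h-principle rather than in any new geometric construction.
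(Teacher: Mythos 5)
First, a point of comparison: the paper contains no proof of this statement at all. It is quoted as a black box from \cite{BEM}, and is in substance the $\pi_0$-injectivity part of the main classification theorem of that reference. So the only meaningful comparison is between your reduction and the way \cite{BEM} themselves derive the classification statement from their parametric h-principle. At that level your skeleton is the right one: view $\{\xi_1,\xi_2\}$ as a map of $\partial[0,1]$ into contact structures with fixed germ near $K$, view the almost-contact homotopy as an extension of this map over $[0,1]$ into almost contact structures, and invoke the relative, parametric weak homotopy equivalence. Your first normalization (shrinking to a smaller $Op(K)$ on which the homotopy is genuinely constant) is also a reasonable, standard step.

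The genuine gap is in what you yourself call the ``second, more serious point,'' and it is not bookkeeping. The parametric h-principle of \cite{BEM} concerns spaces of contact structures that are overtwisted \emph{with a common, fixed overtwisted disk $D$ lying in $M\setminus K$}. You claim such a disk ``can be placed away from $K$'' because $\xi_1$ is overtwisted and overtwistedness is homotopy-invariant. This is false: overtwistedness of $\xi_i$ on $M$ only provides overtwisted disks somewhere, possibly all inside $Op(K)$; the restrictions $\xi_i|_{M\setminus K}$ may be tight, and in that case the conclusion of the theorem can genuinely fail. (Sketch of a counterexample to the statement without the complement hypothesis: take two tight contact structures on $T^3$ that are homotopic as plane fields but not contactomorphic, arrange them to agree on a Darboux ball $K$, and perform the same overtwisting modification, say a full Lutz twist, inside $K$ for both. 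The resulting structures are overtwisted, agree near $K$, and are homotopic as almost contact structures over $M\setminus K$; but a contact homotopy rel $Op(K)$ would, via Gray stability with the isotopy fixed on $Op(K)$, produce a contactomorphism of the original tight structures, a contradiction.) This is exactly why the theorem in \cite{BEM} carries the hypothesis that the structures are overtwisted \emph{in the complement of} $K$ --- a hypothesis the paper's wording of Theorem \ref{homotopy of contact structure bem} drops, but which its only application, Theorem \ref{3rd theorem}, restores by explicitly assuming both complements are overtwisted. Your proof must assume it as well; it cannot be derived. Separately, even granted overtwisted disks in the complement, the two boundary structures must share \emph{literally the same} embedded disk with the same contact germ before the fixed-disk parametric theorem applies; this requires a preliminary isotopy of, say, $\xi_2$ rel $Op(K)$ (using transitivity of ambient contact isotopies on overtwisted disks with standard germ), not just ``keeping the disk fixed during the deformation'' --- on the interior slices of your family there are no contact structures, hence no overtwisted disks, to keep fixed.
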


So, by Gray's stability, $\xi_1$ and $\xi_2$ are isotopic contact structures.

\

 The obstructions to the existence of an almost contact structure on $N^{2n+1}$ lie in the groups $H^i(N;\pi_{i-1}(\Gamma_{n+1}))$, for $1 \leq i \leq 2n+1$. The homotopy obstructions between two almost contact structures on $N$ lie in the groups $H^i(N;\pi_i(\Gamma_{n+1}))$, for $1 \leq i \leq 2n+1$. The stable homotopy groups of $\Gamma_{n}$ were computed by Bott (\cite{B}). 

\begin{theorem}[Bott, \cite{B}] \label{bott's theorem}
	For $q \leq 2n-2$
	
	\[
	 \pi_q(\Gamma_{n}) = \pi_{q+1}(SO) = \begin{cases}
	
	0 \ \ & \text{for} \ \ q \equiv 1,3,4,5 \ (\text{mod} \ 8) \\
	\mathbb{Z} \ \ & \text{for} \ \ q \equiv 2,6 \ (\text{mod} \ 8) \\ \mathbb{Z}_2 \ \ & \text{for} \ \ q \equiv 0,7 \ (\text{mod} \ 8)
	\end{cases}
	\]
\end{theorem}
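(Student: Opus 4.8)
The statement has two essentially independent ingredients: a \emph{stability} claim, that $\pi_q(\Gamma_n)$ is independent of $n$ in the range $q \le 2n-2$, and a \emph{computation} of the stable value, which is where Bott periodicity enters. The plan is to handle these in turn and then read off the eightfold pattern.

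For stability, I would realize $\Gamma_n = SO(2n)/U(n)$ as the space of orthogonal complex structures $J$ on $\R^{2n}$ (those with $J^2 = -\Id$ and $J^TJ = \Id$) and use the evaluation map $\Gamma_{n+1} \to S^{2n}$, $J \mapsto Je_1$. Since an orthogonal $J$ is automatically skew-symmetric, $Je_1$ is a unit vector in $e_1^\perp \cong \R^{2n+1}$, so the map lands in $S^{2n}$. The fibre over $v$ consists of those $J$ with $Je_1 = v$; such a $J$ satisfies $Jv = -e_1$, hence preserves $\mathrm{span}(e_1,v)$ and restricts to an orthogonal complex structure on its $2n$-dimensional orthogonal complement, giving a canonical copy of $\Gamma_n$. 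This produces a fibration $\Gamma_n \to \Gamma_{n+1} \to S^{2n}$ whose fibre inclusion is, up to homotopy, the standard stabilization. The homotopy long exact sequence together with $\pi_q(S^{2n}) = 0$ for $q \le 2n-1$ then forces $\pi_q(\Gamma_n) \to \pi_q(\Gamma_{n+1})$ to be an isomorphism for $q \le 2n-2$. Iterating identifies $\pi_q(\Gamma_n)$ with $\pi_q(\Gamma)$ for the stable space $\Gamma = SO/U = \varinjlim_n \Gamma_n$ throughout that range.

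It then remains to compute $\pi_q(SO/U)$, and here I would invoke Bott periodicity in its Morse-theoretic form, namely the homotopy equivalence $SO/U \simeq \Omega SO$. Concretely, on the space of paths in $SO(2n)$ from $\Id$ to $-\Id$ the energy functional is a Morse--Bott function whose minimum is attained exactly on the set of minimal geodesics, and that set is canonically $SO(2n)/U(n) = \Gamma_n$; the indices of the remaining critical submanifolds grow linearly in $n$, so the inclusion $\Gamma_n \hookrightarrow \Omega SO(2n)$ is highly connected and in the limit yields $SO/U \simeq \Omega SO$. Consequently $\pi_q(\Gamma) = \pi_q(SO/U) = \pi_q(\Omega SO) = \pi_{q+1}(SO)$, which is precisely the first equality in the statement.

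Finally I would substitute the stable homotopy groups of $SO$, themselves the classical output of periodicity: $\pi_m(SO)$ equals $\mathbb{Z}_2, 0, \mathbb{Z}, 0, 0, 0, \mathbb{Z}, \mathbb{Z}_2$ for $m \equiv 1,2,3,4,5,6,7,0 \pmod 8$. Setting $m = q+1$ and sorting residues gives $\mathbb{Z}_2$ for $q \equiv 0,7$, $\mathbb{Z}$ for $q \equiv 2,6$, and $0$ for $q \equiv 1,3,4,5 \pmod 8$, as claimed. The genuinely hard step is the periodicity input $SO/U \simeq \Omega SO$ (equivalently the determination of $\pi_\ast(SO)$); everything else is the elementary fibration-and-ladder argument that propagates it into the unstable range $q \le 2n-2$.
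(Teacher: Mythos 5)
The paper offers no proof of this statement --- it is imported verbatim from Bott --- so there is nothing internal to compare against; your sketch is the standard and correct derivation. Both halves check out: the evaluation fibration $\Gamma_n \to \Gamma_{n+1} \to S^{2n}$ (which is exactly the fibration the paper itself invokes, citing Harris, in the proof of Lemma~3.1) gives stability for $q \le 2n-2$ via the long exact sequence, and the Morse-theoretic identification of $SO/U$ with the space of minimal geodesics from $\mathrm{Id}$ to $-\mathrm{Id}$ yields $SO/U \simeq \Omega SO$, whence $\pi_q(\Gamma_n) \cong \pi_{q+1}(SO)$; your table of $\pi_m(SO)$ and the shift $m = q+1$ reproduce the stated eightfold pattern exactly.
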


 Next, we define the notion of a \emph{null-homotopic} almost contact structure on $\pi$--manifold. 

\begin{definition}
	When $M^{2n+1}$ is a $\pi$--manifold, $TM \oplus \varepsilon^1_M \cong \varepsilon^{2n+2}_M$ always admits a trivial almost complex structure that assigns over each point of $M$ the standard complex structure $J_0(n+1)$ on $\mathbb{C}^{n+1}$. The corresponding homotopy class of almost contact structures on $M$ is called null-homotopic.
\end{definition}

\subsection{Obstructions to contact immersion} 
Similar to the notion of formal immersion one can define a \textit{formal contact immersion} or a \textit{contact monomorphism}.

\begin{definition}
	A formal immersion $F: TM \rightarrow TV$ of $(M,\xi)$ in $(V,\eta)$ is called a contact monomorphism if $F(\xi) = F(TM)\cap{\eta}$.
\end{definition}
	
Gromov (\cite{Gr}) proved the following h-principle for contact immersions.

\begin{theorem}[Gromov] \label{h-principle for contact immersion}
	Let $(M,\xi)$ and $(V,\eta)$ be contact manifolds of dimensions $2n+1$ and $2N+1$ respectively. Assume that $n \leq N-1$. A contact monomorphism $F_0: TM \rightarrow TV$, covering an immersion $f_0: M \rightarrow V$, is formally homotopic to $F_1 = df_1$ for some contact immersion $f_1: M \rightarrow V$.
\end{theorem}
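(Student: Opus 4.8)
\section*{Proof proposal}

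The plan is to realize the relation of contact immersions as a first-order differential relation in the space of $1$-jets and to verify that it is ample in the sense of convex integration, so that Gromov's h-principle applies. First I would fix contact forms $\alpha$ and $\beta$ representing $\xi$ and $\eta$, and observe that an immersion $f\colon M\looparrowright V$ is an isocontact immersion precisely when $f^*\beta=\alpha$, after rescaling $\alpha$ within its conformal class. Indeed $f^*\beta=\alpha$ forces $\xi=\ker\alpha=(df)^{-1}(\ker\beta)=(df)^{-1}(\eta)$, which together with injectivity of $df$ gives $df(\xi)=df(TM)\cap\eta$; moreover $f^*d\beta=d(f^*\beta)=d\alpha$ automatically, so $df(\xi)$ is a symplectic subspace of $(\eta,d\beta)$ and no higher-order constraint is needed. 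Conversely, the contact monomorphism condition $F_0(\xi)=F_0(TM)\cap\eta$ makes $F_0$ transverse to $\eta$ and yields $\beta\circ F_0=g\cdot\alpha$ for a positive function $g$, so replacing $\alpha$ by $g\alpha$ turns $F_0$ into a formal solution of $f^*\beta=\alpha$. Thus the entire problem lives in $J^1(M,V)$.

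Next I would define $\mathcal{R}\subset J^1(M,V)$ to be the set of $1$-jets $(p,v,L)$ with $L\colon T_pM\to T_vV$ injective and $\beta_v\circ L=\alpha_p$; its holonomic sections are exactly the contact immersions, and its sections covering $f_0$ are exactly the contact monomorphisms. The key step is ampleness in each principal direction. Fixing a coordinate direction $e$ on $M$ and the derivatives $w_1,\dots,w_{2n}$ of $f$ in the remaining $2n$ directions, the admissible values of $w=df(e)$ form the complement, inside the affine hyperplane $\{w\in T_vV:\beta_v(w)=\alpha_p(e)\}$ of dimension $2N$, of the subspace $\operatorname{span}(w_1,\dots,w_{2n})$ of dimension at most $2n$. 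The forbidden (non-injective) locus therefore has codimension at least $2N-2n\ge 2$ in that hyperplane exactly when $n\le N-1$; hence the complement is connected and its convex hull is the whole hyperplane. This is precisely the ampleness hypothesis, and it is here that the codimension assumption $n\le N-1$ enters.

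With ampleness established, Gromov's convex integration theorem, applied chart by chart over a triangulation of $M$ in its parametric and relative form, converts the formal solution $F_0$ into a genuine solution $df_1$ and produces a homotopy through sections of $\mathcal{R}$, i.e.\ through contact monomorphisms of $(M,\xi)$ into $(V,\eta)$, from $F_0$ to $df_1$; this is exactly the asserted formal homotopy from $F_0$ to $F_1=df_1$. The main obstacle I expect is the bookkeeping needed to keep the injectivity condition and the linear contact constraint satisfied simultaneously throughout the iterated one-dimensional convex integrations, and to interlock the construction with the Smale--Hirsch homotopy so that the whole path stays inside the class of contact monomorphisms. The genuinely essential analytic input, however, is the ampleness computation above, which both drives the argument and pins down the codimension range. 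An alternative would be to argue that $\mathcal{R}$ is a microflexible, $\operatorname{Diff}(M)$-invariant relation and invoke Gromov's general h-principle, but for this immersion statement the convex integration route is the most direct.
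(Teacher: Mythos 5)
The paper itself offers no proof of this statement---it is quoted from Gromov \cite{Gr} (see also Chapter 12 of \cite{EM})---so there is nothing in-paper to compare your argument against; I can only judge the proposal on its own terms, and there it has a genuine gap at the point where you invoke convex integration. The h-principle for ample differential relations applies to \emph{open} relations $\mathcal{R}\subset J^{1}(M,V)$, whereas your relation $\{L \text{ injective},\ \beta_v\circ L=\alpha_p\}$ is cut out by the $2n+1$ scalar equalities $\beta_{f(x)}(\partial_i f)=\alpha(\partial_i)$ and is not open. Your computation of the principal slices is correct as linear algebra (and correctly locates where $n\le N-1$ enters), but ampleness of a non-open relation does not feed into the theorem you are citing. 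Concretely, one-dimensional convex integration in the direction $e=\partial_j$ produces a map whose $1$-jet is only $C^{0}$-close to the chosen path in $\mathcal{R}$; for an open relation closeness suffices, but here the equality constraints in the remaining directions $i\ne j$ are destroyed by the oscillatory perturbation---both because $\partial_i f$ changes for $i\ne j$ and because the value $f(x)$ moves, hence so do the coefficients $\beta_{f(x)}$ of all the constraints. Repairing this would require a Nash--Kuiper-type error-absorbing iteration, or the machinery of convex integration for closed, fibered, ample relations, neither of which you set up. So the ``bookkeeping'' you flag as the main obstacle is in fact the essential unresolved difficulty, and the ampleness computation is not the decisive analytic input. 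A secondary issue: your relation encodes $F^{-1}(\eta)=\xi$ but not the requirement that $F|_{\xi}$ be conformally symplectic from $(\xi,d\alpha)$ to $(\eta,d\beta)$, which is part of the formal data in Gromov's theorem and is what this paper uses later when it identifies contact monomorphisms with sections of a complex Stiefel bundle; your homotopy would a priori only stay in the weaker class.

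For orientation, the standard proof (Gromov; Eliashberg--Mishachev) avoids convex integration entirely: one uses the formal data to equip a neighborhood of the zero section of the normal bundle of the formal immersion with a contact structure in which $M$ sits as an isocontact submanifold, thereby reducing to an equidimensional isocontact immersion problem for an \emph{open} $(2N+1)$-manifold, which is then solved by the h-principle for microflexible sheaves invariant under contact diffeotopies (microextension plus flexibility over open manifolds). This ``thicken the manifold, extend the structure, apply the open-manifold h-principle'' scheme is exactly the strategy the paper itself follows in the proof of its Lemma 3.1, so if you want a proof in the spirit of this article, that is the route to take.
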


For contact embedding, Gromov (\cite{Gr}) proved the following theorem. The statement here is taken from \cite{EM}.

\begin{theorem}[Gromov] \label{h-principle for contact embedding}
	Let $(M,\xi)$ and $(V,\eta)$ be contact manifolds of dimension $2n+1$ and $2N+1$ respectively. Suppose that the differential $F_0 = Df_0$ of an embedding $f_0 : (M,\xi) \rightarrow (V,\eta)$ is homotopic via a homotopy of monomorphisms $F_t : TM \rightarrow TV$ covering $f_0$ to a contact monomorphism $F_1 : TM \rightarrow TV$.
	
	\begin{enumerate}
		\item 	Open case: If $n \leq N-1$ and the manifold $M$ is open, then there exists an isotopy $f_t : M \rightarrow V$ such that the embedding $f_1 : M \rightarrow W$ is contact and the differential $Df_1$ is homotopic to $F_1$ through contact monomorphisms.
		\item 	Closed case: If $n \leq N-2$, then the above isotopy $f_t$ exists even if $M$ is closed.
	\end{enumerate}
	
\end{theorem}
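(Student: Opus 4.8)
The plan is to prove this via Gromov's method of microflexible, diffeotopy-invariant sheaves, realizing isocontact embeddings as sections of an appropriate sheaf on $M$ and deducing the h-principle from flexibility. Fix contact forms $\alpha$ and $\beta$ representing $\xi$ and $\eta$. An embedding $f : M \to V$ is isocontact exactly when $f^*\beta = h\cdot\alpha$ for some positive function $h$, equivalently when $\ker(f^*\beta) = \xi$ as co-oriented hyperplane fields; the formal counterpart is precisely the data $(f_0, F_t)$ in the hypothesis, a contact monomorphism $F_1$ homotopic through monomorphisms covering $f_0$ to $Df_0$. Let $\mathcal{F}$ denote the sheaf assigning to each open $U \subseteq M$ the space of isocontact embeddings $U \to (V,\eta)$, and let $\mathcal{F}^{f}$ be the corresponding sheaf of formal solutions. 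The goal is to show the inclusion $\mathcal{F} \hookrightarrow \mathcal{F}^{f}$ is a parametric, relative weak homotopy equivalence.

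The first and main step is to verify that $\mathcal{F}$ is \emph{microflexible}: given a compact pair $K_0 \subset K$ in $M$, a polyhedron-parametrized family of isocontact embeddings defined on $Op(K)$, and a deformation of this family over $Op(K_0)$, the deformation should extend to $Op(K)$ for a short time. Working in a Darboux chart of $(V,\eta)$, the isocontact condition becomes a first-order, co-oriented constraint on the $1$-jet of $f$; because the embedding has positive codimension there is room in the normal directions to absorb the perturbation, while maximal non-integrability is stable under small $C^1$-deformations, and these two facts together produce the required short-time extension. This is where the contact geometry genuinely enters, and it must be arranged with enough parametric and relative control to feed Gromov's flexibility theorem.

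Next I would exploit invariance and sharp movability. The sheaf $\mathcal{F}$ is invariant under post-composition by the pseudogroup of local contactomorphisms of $(V,\eta)$: if $f$ is isocontact and $\psi$ is a contactomorphism, then $\psi \circ f$ is again isocontact. This pseudogroup acts sharply movably on $V$, since contact vector fields generated by arbitrary contact Hamiltonians $H : V \to \R$ move each point of $V$ in all directions transverse to the image of $f$. By Gromov's theorem that a microflexible, sharply diffeotopy-invariant sheaf is flexible, $\mathcal{F}$ then satisfies the parametric, relative h-principle over any open manifold. Applied to the open manifold $M$ of part (1), with codimension $2(N-n) \geq 2$ (i.e.\ $n \leq N-1$) supplying the room needed for microflexibility, this deforms the formal solution $(f_0, F_t)$ to a genuine isocontact embedding $f_1$ isotopic to $f_0$, with $Df_1$ homotopic to $F_1$ through contact monomorphisms; this settles the open case.

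For the closed case (part (2)) I would remove a point and write $M = (M \setminus \{q\}) \cup D^{2n+1}$, apply the open case to the open manifold $M \setminus \{q\}$ relative to the already-contact region, and then extend the isotopy over the top handle $D^{2n+1}$. This final extension is exactly where the sharper hypothesis $n \leq N-2$ is spent: with one further unit of codimension $N-n$ the extension problem over the top cell remains flexible, so the obstruction to absorbing the top cell while keeping the map an embedding vanishes, completing the isotopy on all of $M$. The hardest part is the microflexibility of Step one, and the second delicate point is isolating precisely why one extra unit of codimension suffices for the closed-case top-cell extension.
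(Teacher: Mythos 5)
A preliminary remark: the paper offers no proof of this statement --- it is quoted as a theorem of Gromov \cite{Gr}, with the formulation taken from \cite{EM} --- so your proposal has to be measured against the proofs in the literature rather than against anything in this paper. Your overall plan (the sheaf of isocontact embeddings, microflexibility, then Gromov's flexibility theorem) is indeed one of Gromov's routes, but you invoke the flexibility theorem with the invariance hypothesis on the wrong side, and this is a genuine gap, not a cosmetic one. Gromov's theorem (``microflexible $+$ invariant under sharply moving diffeotopies $\Rightarrow$ flexible, hence the parametric h-principle near positive-codimension polyhedra and over open manifolds'') concerns a sheaf over the \emph{source} $M$: it requires invariance under a pseudogroup of diffeomorphisms of $M$ acting by precomposition, together with the property that diffeotopies from this pseudogroup sharply move compact subsets of $M$, because its proof works by moving the core polyhedron $K \subset M$ with such source diffeotopies. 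Your sheaf $\mathcal{F}$ is \emph{not} $\Diff(M)$-invariant (the contact structure $\xi$ on the source is fixed data); it is invariant only under local contactomorphisms of $(M,\xi)$, and the substantive point in Gromov's argument --- the ``capaciousness'' of contact structures, in the terminology of \cite{EM} --- is precisely that contact Hamiltonian diffeotopies of the \emph{source} sharply move compact subsets of positive codimension. What you verify instead is invariance under contactomorphisms of the \emph{target} $(V,\eta)$ together with movability of points of $V$. Ambient contact flows do enter the proof, but at a different place: they are the correct tool for proving \emph{microflexibility} (a small deformation of the family over $Op(K_0)$ is realized by an ambient contact Hamiltonian isotopy of $V$ whose Hamiltonian is cut off near the image of $K_0$), which is the step you instead tried to argue via ``room in the normal directions''. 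Target invariance cannot be substituted for source invariance in the flexibility theorem, so as written the open case does not follow from the theorem you cite.

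The closed case is also not proved. If you puncture $M$ and apply the open case to $M \setminus \{q\}$, the output is an isotopy of embeddings of $M\setminus\{q\}$ that is completely uncontrolled near the puncture; the resulting isocontact embedding need not extend even continuously over $q$, so there is no well-posed ``extension problem over the top handle $D^{2n+1}$'' left to solve. Moreover, your account of where $n \le N-2$ enters (``with one further unit of codimension the extension problem over the top cell remains flexible, so the obstruction \dots vanishes'') asserts the desired conclusion rather than deriving it. This cannot be waved through: in codimension two the closed case is genuinely delicate --- existence was settled only recently (\cite{PP}, \cite{CPP}, \cite{HoH}) and the parametric statement fails (\cite{CE}) --- so a correct proof must spend the hypothesis $N \ge n+2$ at an identifiable step. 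The proof in \cite{EM} organizes the induction differently: one first makes the embedding isocontact on a neighborhood of a skeleton of positive codimension (this is where holonomic approximation, or flexibility near polyhedra, applies), and the top-dimensional cells are then handled rel boundary by a separate correction argument; it is exactly there that $N \ge n+2$ is used. To repair your outline you would need (a) the sharp-movability/capaciousness statement for the source contact pseudogroup of $(M,\xi)$, and (b) an actual rel-boundary argument for the top cell; both exist in the literature, but neither is supplied, or even correctly located, in your sketch.
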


Thus, if an embedding is regularly homotopic to a contact immersion, then it is isotopic to a contact embedding. We now discuss the analogous obstruction problem for the existence of a contact monomorphism.

\

\textbf{Obstructions to contact monomorphism :}  Consider a symplectic vector space $(X,\omega)$. Let $J$ be an $\omega$-compatible almost complex structure (i.e., $\omega(Ju,Jv) = \omega(u,v)$ and $\omega(u,Ju)>0$ for all $u,v \in X\setminus\{0\}$). If $Y$ is a symplectic subspace of $(X,\omega)$, then $Y$ has to be a $J$-subspace of $(X,J)$ and vice-versa. An almost complex structure $J_\xi$ on the contact hyperplane bundle $\xi = Ker \{\alpha\}$ is called $\xi$--compatible, if it is compatible with the conformal symplectic structure on $\xi$ induced by $d\alpha$. A contact monomorphism takes $\xi$ to a symplectic sub-bundle of $\eta$. If $J_\eta$ is an $\eta$--compatible almost complex structure, then the contact monomorphism takes $\xi$ to a $J_{\eta}$-sub-bundle of $(\eta,J_{\eta})$. So, finding a contact monomorphism from $(TM,\xi)$ to $(T\mathbb{R}^{2N+1},\eta)$ is equivalent to finding a $U(n)$--equivariant map from the complex $n$-frame bundle associated to $\xi$ to $V^\mathbb{C}_{N,n}$. In other words, finding a contact monomorphism is equivalent to the existence of a section of the associated $V^\mathbb{C}_{N,n}$--bundle of $TM$ (see $2.2$ in \cite{Ka}). Here, $V^\mathbb{C}_{N,n}$ denotes the complex Stiefel manifold. Thus, $(M,\xi)$ has a contact monomorphism in $(V,\eta)$ if and only if all the obstructions classes in $H^{i}(M;\pi_{i-1}(V^\mathbb{C}_{N,n}))$ vanish for $1\leq{i}\leq2n+1$.    \\

\textbf{From contact immersion to contact embedding :}  In the previous sections, we saw that any formal immersion of $M^{2n+1}$ in $\mathbb{R}^{2N+1}$ is given by a section $s_F$ of the associated $V_{2N+1,2n+1}$-bundle of $TM$. For a contact monomorphism $F_\mathbb{C}$, let $s_{F_\mathbb{C}}$ denote the corresponding section to the associated $V^\mathbb{C}_n(\eta)$--bundle. $s_{F_\mathbb{C}}$ also induces a section map $s_\mathbb{C}$ to the associated $V_{2N+1,2n+1}$-bundle via the inclusion $V^\mathbb{C}_{N,n} \subset V_{2N,2n} \subset V_{2N+1,2n+1}$. Let $\iota: M \hookrightarrow V$ be an embedding and let $s_\iota$ denote the corresponding section to the associated $V_{2N+1,2n+1}$-bundle. The homotopy obstructions between $s_\iota$ and $s_\mathbb{C}$ lie in the groups $H^{i}(M;\pi_{i}(V_{2N+1,2n+1}))$, for $1 \leq i \leq2n+1$. If all of these obstructions vanish, then by Theorem \ref{h-principle for contact embedding}, $\iota$ can be isotoped to a contact embedding. \\

For example, let $(V,\eta)=(\mathbb{R}^{4n+3},\xi_{std})$. Note that $V_{4n+3,2n+1}$ is $(2n+1)$-connected and $V^{\mathbb{C}}_{2n+1,n}$ is $(2n+2)$-connected. So, all of the groups $H^{i}(M;\pi_{i-1}(V^{\mathbb{C}}_{2n+1,n})$ and $H^{i}(M;\pi_{i}(V_{2N+1,2n+1})$ vanish, for $1\leq i \leq2n+1$. By the Whitney embedding theorem, any smooth $(2n+1)$-manifold embeds into ${\mathbb{R}}^{4n+3}$. Thus, we get the result of Gromov (\cite{Gr}) saying that every contact manifold $(M^{2n+1},\xi)$ has an isocontact embedding in $({\mathbb{R}}^{4n+3},\xi_{std})$.

\begin{remark} \label{remark on chern class and normal bundle}
	When the embedding co-dimension is $\leq dim(M)-1$, there is a natural topological obstruction to contact embedding. It has the following description. If $\iota:(M^{2n+1},\xi) \hookrightarrow (\mathbb{R}^{2N+1},\eta)$ is a contact embedding, then the normal bundle $\nu(\iota) = \iota^*(\eta)/{\xi}$ has an induced complex structure on it. So we have the following relation of total Chern classes. $$ c(\xi \oplus \nu(\iota)) = \iota^*(\eta) = 1 $$. Let $\bar{c}_j(\xi)$ denote the $j^{th}$ order cohomology class in $(1+c_1(\xi)+c_2(\xi)+...+c_n(\xi))^{-1}$. Since the Euler class of the normal bundle of an embedding in $\mathbb{R}^{2N+1}$ is zero, $$c_{N-n}(\nu(\iota)) = 0 \Leftrightarrow \bar{c}_{N-n}(\xi) = 0$$ This gives a condition on the Chern classes of $\xi$. Thus, for isocontact embedding of co-dimension $\leq dim(M)-1$, one has to restrict the problem on the contact structures whose Chern classes satisfy this condition. For isocontact embedding with trivial symplectic normal bundle, the following holds. $$ \xi \oplus \nu(\iota) \cong \xi \oplus \varepsilon_M^{N-n}(\mathbb{C}) = \eta|_{\iota(M)} \cong \varepsilon_M^{N}(\mathbb{C})$$
	Thus, $c_i(\xi \oplus \varepsilon_M^{N-n}(\mathbb{C})) = 0 \Leftrightarrow c_i(\xi)=0 $, for $1 \leq i \leq n$. Therefore, $\bar{c}_{N-n}(\xi) = 0$.
\end{remark}

\subsection{Smooth embeddings in Euclidean space}

The following generalization of Whitney embedding theorem is due to Haefliger and Hirsch (\cite{HH}).

\begin{theorem}[Haefliger-Hirsch] \label{haefligar-hirsch} If $M^n$ is a closed orientable $k$-connected $n$-manifold $(0 \leq k \le \frac{1}{2}(n-4))$, then $M^n$ embeds in ${\mathbb{R}}^{2n-k-1}$. 
	
\end{theorem}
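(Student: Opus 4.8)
The plan is to realize the embedding in two stages that together reproduce the Haefliger--Hirsch argument: first produce an immersion $f\colon M^n \looparrowright \R^{2n-k-1}$, and then remove its self-intersections to promote it to an embedding. The hypothesis $k \le \frac12(n-4)$ is exactly what places the codimension $c = n-k-1 \ (\ge 3)$ in the \emph{metastable range}, where by a theorem of Haefliger the existence (and isotopy classification) of embeddings $M^n \inj \R^m$ is governed by $\Z/2$-equivariant maps on the deleted product. The master object I would work with is the equivariant map
$g(x,y) = \dfrac{f(x)-f(y)}{|f(x)-f(y)|}\colon \tilde M \too S^{2n-k-2},$
where $\tilde M = (M \x M)\setminus \Delta$ carries the factor-swap $\Z/2$-action and $S^{2n-k-2}$ the antipodal action. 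Its behavior near the diagonal encodes the immersion (the differential $Df$), while its behavior away from the diagonal encodes the absence of double points; constructing such a $g$ is equivalent, in this range, to the desired embedding.

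First I would build the immersion via the Smale--Hirsch h-principle stated above: an immersion of $M^n$ in $\R^{2n-k-1}$ exists iff the bundle associated to the $n$-frame bundle of $TM$ with fiber the Stiefel manifold $V_{2n-k-1,\,n}$ admits a section, with obstructions in $H^{j}(M;\pi_{j-1}(V_{2n-k-1,\,n}))$ for $1 \le j \le n$. Since $V_{2n-k-1,\,n}$ is $(n-k-2)$-connected, all obstructions in degrees $j \le n-k-1$ vanish automatically; the finitely many surviving obstructions in degrees $n-k \le j \le n$ I would dispatch using orientability, Poincar\'e duality and the resulting vanishing of the relevant Wu/characteristic classes of a closed $n$-manifold, producing a rank-$c$ normal bundle and hence $f$. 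This simultaneously supplies the equivariant map $g$ on a deleted tubular neighborhood of $\Delta$.

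The second stage is to extend $g$ from near the diagonal across all of $\tilde M$. For $f$ in general position the self-intersection locus has dimension $2n-(2n-k-1) = k+1$, and the metastable inequality secured by $k \le \frac12(n-4)$ lets one apply Haefliger's embedding theorem: $f$ is isotopic to an embedding precisely when the equivariant $g$ extends over $\tilde M$. The obstructions to this equivariant extension lie in the equivariant cohomology groups $H^{i+1}_{\Z/2}(\tilde M;\, \pi_i(S^{2n-k-2}))$, and because $\pi_i(S^{2n-k-2}) = 0$ for $i < 2n-k-2$, only the degrees $2n-k-1 \le i+1 \le 2n$ can contribute.

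The main obstacle is showing that these top equivariant obstructions vanish, and this is where the sharp dimension $2n-k-1$ and the $k$-connectivity are used decisively. The idea is that $k$-connectivity of $M$ raises the connectivity of $\tilde M$ and of its quotient $\tilde M/(\Z/2)$ enough to annihilate the equivariant cohomology in exactly the window of degrees $2n-k-1,\dots,2n$ where obstructions could survive; the numerical comparison — that the cohomological dimension $2n = \dim(M\x M)$ drops below $2n-k-1$ once the connectivity gain is taken into account — is the crux and the most delicate step, requiring a careful analysis of the equivariant (co)homology of the deleted product rather than a routine count. Once the extension of $g$ is achieved, Haefliger's metastable theorem upgrades $f$ to the desired embedding $M^n \inj \R^{2n-k-1}$.
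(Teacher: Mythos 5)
First, note that the paper offers no proof of this statement: Theorem \ref{haefligar-hirsch} is quoted as background directly from Haefliger--Hirsch \cite{HH}, so your proposal can only be measured against the argument in that reference, on which it is clearly modeled.

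Your skeleton --- Smale--Hirsch to get an immersion, then Haefliger's metastable criterion reducing the embedding problem to an equivariant map $\tilde M \to S^{2n-k-2}$ on the deleted product --- is the historically correct one, but the write-up stops exactly where the content of the theorem begins, and the one mechanism you offer for the decisive step is wrong. You propose that $k$-connectivity forces the equivariant cohomological dimension of $\tilde M$ to drop \emph{below} $2n-k-1$, emptying the whole obstruction window. It does not: connectivity gives at best $\mathrm{cd}\bigl(\tilde M/(\Z/2)\bigr) \le 2n-k-1$, with equality in general (for $M=S^n$ the quotient of the deleted product is homotopy equivalent to real projective $n$-space, of dimension exactly $2n-k-1$ with $k=n-1$; in the range of the theorem the primary obstruction group in degree $2n-k-1$ is, up to duality, essentially $H_{k+1}(M)$ with coefficients $\Z$ or $\Z_2$, which is the first homology of $M$ that is \emph{not} killed by $k$-connectivity). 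So the primary obstruction survives every dimension count; showing that this particular class vanishes \emph{is} the theorem, and in \cite{HH} this is done by identifying it with the obstruction to a cross-section of the associated $V_{2n-k-1,n}$-bundle and exploiting orientability. You explicitly defer this step (``the crux and the most delicate step'') without carrying it out, which leaves the proof without its main content.

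Two further gaps of the same kind: (i) in the immersion stage, the top obstruction lies in $H^{n}(M;\pi_{n-1}(V_{2n-k-1,n}))\cong\pi_{n-1}(V_{2n-k-1,n})$ by Poincar\'e duality, so duality does not kill it, and ``orientability and Wu classes'' is a gesture rather than an argument --- the existence of an immersion of a $k$-connected $M^n$ in $\R^{2n-k-1}$ is itself a nontrivial theorem of the same circle of ideas, not a routine obstruction count; (ii) you never verify the metastable inequality needed to invoke Haefliger's criterion: in its commonly quoted form $2m\ge 3(n+1)$ it fails for $m=2n-k-1$ at the boundary case $n=2k+4$ permitted by the hypothesis $k\le\frac12(n-4)$, so even the reduction to the deleted product requires the sharper statement actually proved and used in \cite{HH}.
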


If one further assumes that $M^n$ is a $\pi$--manifold, then we have the following result due to Sapio (\cite{Sa}).

\begin{theorem} [Sapio] \label{thm:sapio} Let $M^n$ be a $k$-connected, $n$-dimensional $\pi$--manifold $(n \geq 5$, and $k \leq [n/2])$. Assume that $n \not\equiv 6 \ (mod \ 8)$. Then
	\begin{enumerate}
		\item $M^n$ almost embeds in ${\mathbb{R}}^{2n-2k-1}$ with a trivial normal bundle.
		\item If $M^n$ bounds a $\pi$--manifold, then $M^n$ embeds in ${\mathbb{R}}^{2n-2k-1}$ with a trivial normal bundle.
	\end{enumerate}
\end{theorem}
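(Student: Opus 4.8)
The plan is to realize the embedding by starting from a framed immersion, using the $k$-connectivity to cancel self-intersections via the Whitney trick (in the range where Haefliger's deleted-product criterion, extended by connectivity, applies), and absorbing the single residual surgery-theoretic obstruction into a homotopy sphere; the hypothesis that $M$ bounds a $\pi$-manifold is precisely what forces that sphere to be standard.

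First I would produce a framed immersion. Since $M^n$ is a $\pi$-manifold, $TM$ is stably trivial, and in the substantive range of the hypothesis the normal rank $N-n = n-2k-1$ makes $TM\oplus\varepsilon^{N-n}_M$ a bundle of rank $N = 2n-2k-1$ exceeding $n$; a stably trivial bundle of rank greater than the base dimension is trivial, so $TM\oplus\varepsilon^{N-n}_M \cong \varepsilon^N_M$. This bundle monomorphism $TM \hookrightarrow \varepsilon^N_M$ is covered, by the Smale--Hirsch h-principle, by an immersion $f : M \looparrowright \mathbb{R}^{2n-2k-1}$ with trivial normal bundle. After a small perturbation $f$ is self-transverse, so its double-point set is a manifold of dimension $2n-N = 2k+1$ equipped with the free sheet-interchanging $\mathbb{Z}/2$-action.

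Next I would upgrade $f$ to an embedding. The codimension $n-2k-1$ is at least $3$ in the relevant range, so the Whitney trick is available; and because $M$ is $k$-connected, the equivariant obstruction to deforming $f$ to an embedding --- equivalently, to the existence of an equivariant map $\widetilde{M}\to S^{N-1}$ on the deleted product --- vanishes below the top relevant degree, exactly as in the Haefliger--Hirsch argument but with the extra room furnished by the trivial normal framing. Cancelling double points in pairs against Whitney disks then removes all self-intersections except for one localized, framed class. The obstruction to eliminating this last class is a framed-bordism invariant of the double-point manifold, which I would identify with an element of the Kervaire--Milnor group $\Theta_n$, and in fact of $bP_{n+1}$ since the cancellation is carried out inside a parallelizable trace.

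Connect-summing with the homotopy sphere $\Sigma^n$ representing this class absorbs the obstruction, giving an embedding of $M\#\Sigma^n$ in $\mathbb{R}^{2n-2k-1}$ with trivial normal bundle, which is statement (1); the hypothesis $n\not\equiv 6\pmod 8$ is what guarantees that this residual class is of $bP_{n+1}$-type rather than an unkillable integral Bott obstruction to compressing the normal framing (compare Theorem \ref{bott's theorem}, where $q\equiv 2,6\pmod 8$ contributes a $\mathbb{Z}$). If moreover $M = \partial W$ with $W$ a $\pi$-manifold, then $W$ provides a parallelizable coboundary along which the plumbing invariant defining the $bP_{n+1}$-class is trivialized, so $\Sigma^n$ is the standard sphere and $M$ itself embeds, which is statement (2). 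The main obstacle is this last identification: one must track the framed cobordism class of the $(2k+1)$-dimensional double-point manifold through the Whitney cancellations, match it with the Kervaire--Milnor plumbing invariants, and verify that a $\pi$-coboundary kills it --- it is here that the congruence on $n$ modulo $8$ and the stable parallelizability of $M$ are genuinely used.
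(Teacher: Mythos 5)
First, a point of order: the paper does not prove Theorem \ref{thm:sapio} at all --- it is quoted from De Sapio \cite{Sa} and used as a black box --- so your proposal has to be measured against De Sapio's published argument, which is genuinely different from yours. He does not desingularize an immersion. Instead he performs framed surgery to produce a $k$-connected $\pi$-coboundary $W^{n+1}$ of $M$ (or of $M\#\Sigma^n$ when $M$ itself does not bound a $\pi$-manifold: this is where the homotopy sphere enters, as a surgery-theoretic correction term, not as a residue of Whitney cancellation), arranges by handle trading that $W$ has handles only of index $\leq n-k$, so that $W$ is a thickening of an $(n-k)$-dimensional spine, and then embeds $W$ essentially by general position on that spine; $M=\partial W$ then lies in $\R^{2n-2k-1}$ with trivial normal bundle. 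The congruence $n\not\equiv 6\pmod 8$ is a Kervaire-invariant condition arising in the surgery step; it has nothing to do with Bott's computation of $\pi_q(\Gamma_m)$ (Theorem \ref{bott's theorem}), which concerns almost complex structures and plays no role in this purely smooth statement.

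Second, and more importantly, your route has a gap that I do not think can be repaired. The Whitney trick together with the deleted-product obstruction theory you invoke operates only in Haefliger's metastable range $2N\geq 3(n+1)$; with $N=2n-2k-1$ this forces $n\geq 4k+5$, whereas the theorem is asserted for all $k\leq [n/2]$ and is applied in this paper with $k$ as large as $n-1$ for a $(2n+1)$-manifold, far outside that range. Even inside the metastable range, $k$-connectivity of $M$ makes the equivariant obstructions on the deleted product vanish only above degree $2n-k-1$ --- that is exactly the content of Haefliger--Hirsch (Theorem \ref{haefligar-hirsch}), and that bound is sharp for $k$-connected manifolds in general. To reach $\R^{2n-2k-1}$ you would need vanishing in the additional $k$ degrees between $2n-2k-1$ and $2n-k-1$, and a trivial normal framing of the immersion does not change the equivariant cohomology of the deleted product, so it cannot supply this; the improvement from $k$ to $2k$ is precisely what the $\pi$-coboundary buys in De Sapio's proof and is invisible from the immersion-theoretic side. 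Finally, your identification of the residual obstruction with a class in $bP_{n+1}$ is unsupported: the double-point manifold has dimension $2k+1$, not $n$, and no mechanism is given for converting its framed bordism class into a homotopy $n$-sphere, nor for why a $\pi$-coboundary of $M$ would kill it.
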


\section{contact embedding via h-principle}

 Note that $TN^n \bigoplus \epsilon^{k+1}_N \cong \epsilon^{n+k+1}_N \Leftrightarrow TN^n \bigoplus \epsilon^1_N = \epsilon^{n+1}_N$ (see corollary $1.4$, p-$70$ of \cite{Kos}). Therefore, $M^{2n+1}$ is a $\pi$-manifold if and only if $M^{2n+1}$ embeds in the Euclidean space $\mathbb{R}^{d}$ with a trivial normal bundle, for some $d \geq 2n+2$. The following lemma is the main ingredient to prove Theorem \ref{1st theorem} .

\begin{lemma}
	If an almost contact manifold $M^{2n+1}$ embeds in ${\mathbb{R}}^{2N+1}$ with a trivial normal bundle, then there exists a contact structure $\xi_0$ such that $(M,\xi_0)$ isocontact embeds into $({\mathbb{R}}^{2N+3},\xi_{std})$ ($N-n\geq1$). 
\end{lemma}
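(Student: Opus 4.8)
The plan is to use the hypothesis in two stages: first upgrade the given smooth embedding with trivial normal bundle into an embedding into the \emph{contact} Euclidean space $(\mathbb{R}^{2N+3},\xi_{std})$ of dimension two higher, and then apply Gromov's h-principle (Theorem \ref{h-principle for contact embedding}) to promote it to a genuine contact embedding. Concretely, start from the embedding $M^{2n+1}\hookrightarrow\mathbb{R}^{2N+1}$ with trivial normal bundle $\nu\cong\varepsilon^{2(N-n)}_M$. Sitting inside $\mathbb{R}^{2N+3}=\mathbb{R}^{2N+1}\times\mathbb{R}^2$, the total space of a tubular neighborhood of $M$ looks like $M\times\mathbb{R}^{2(N-n)}\times\mathbb{R}^2$, and the extra $\mathbb{R}^2$ factor is exactly what is needed to build a contact tubular neighborhood. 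The aim of the first stage is to produce a \emph{contact monomorphism} $F_1:TM\to T\mathbb{R}^{2N+3}$ covering this embedding, i.e. a bundle monomorphism homotopic to the differential of the embedding through monomorphisms, carrying an almost contact structure $\xi_0$ on $M$ to a symplectic (equivalently $J$-complex) subbundle of $\xi_{std}$.

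The key step is to \emph{choose} the contact structure $\xi_0$ rather than take it as given — this is why the statement only asserts existence of \emph{some} $\xi_0$. Because $M$ is a $\pi$-manifold (its stable tangent bundle is trivial), one has a canonical trivialization $TM\oplus\varepsilon^1_M\cong\varepsilon^{2n+2}_M$, and composing a trivialization with the fibration $SO(2n+2)\to\Gamma_{n+1}$ yields a preferred almost contact structure; this is precisely the null-homotopic almost contact structure described in the preliminaries. I would let $\xi_0$ be a contact structure realizing this class. The point of working in codimension raised by $2$ is that the complex Stiefel manifold $V^{\mathbb{C}}_{N+1,n}$ into which the contact frames must map becomes highly connected relative to the dimension $2n+1$ of $M$, so that \emph{all} the obstruction classes in $H^{i}(M;\pi_{i-1}(V^{\mathbb{C}}_{N+1,n}))$ for $1\le i\le 2n+1$ vanish automatically. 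This is the same connectivity mechanism exhibited in the worked example at the end of the ``From contact immersion to contact embedding'' discussion, where $V^{\mathbb{C}}_{2n+1,n}$ was $(2n+2)$-connected; here the trivial normal bundle plus the extra $\mathbb{R}^2$ give enough room that a contact monomorphism covering the embedding exists and that its associated real Stiefel section is homotopic to the section determined by the embedding itself, so the comparison obstructions in $H^i(M;\pi_i(V_{2N+3,2n+1}))$ also vanish.

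Once the contact monomorphism $F_1$ is in hand and shown to be homotopic, through monomorphisms covering the fixed embedding $f_0$, to the true differential $Df_0$, I would invoke Theorem \ref{h-principle for contact embedding}: since the embedding has codimension $2(N+1-n)\ge 4$, we are comfortably in the range $n\le (N+1)-2$ of the closed case, so the embedding is isotopic to an honest isocontact embedding $(M,\xi_0)\hookrightarrow(\mathbb{R}^{2N+3},\xi_{std})$. The hypothesis $N-n\ge 1$ guarantees the codimension is at least $4$, which is exactly what the closed-case h-principle needs.

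The main obstacle I expect is the bookkeeping in the first stage: verifying that a contact monomorphism covering the specific embedding not only exists but is \emph{formally homotopic to the differential of the embedding through monomorphisms}, as opposed to merely existing abstractly. These are two separate obstruction-theoretic computations — existence of a section of the $V^{\mathbb{C}}$-bundle, and vanishing of the relative obstructions comparing it to the section induced by $Df_0$ in the real Stiefel bundle — and both must be controlled simultaneously. The trick that makes this tractable is that the trivial normal bundle lets one describe both sections very explicitly (one from the $\pi$-manifold trivialization, one from the product structure of the tubular neighborhood), and the connectivity of $V^{\mathbb{C}}_{N+1,n}$ and $V_{2N+3,2n+1}$ in the relevant range forces the obstructions to land in trivial groups. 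I would organize the proof so that the choice of $\xi_0$ as the null-homotopic (or $SO$-) almost contact structure is made precisely to align these two sections, after which the h-principle does the rest.
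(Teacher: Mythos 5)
Your overall architecture (produce formal contact data covering the given embedding, then invoke Gromov's h-principle) is a legitimate alternative to the paper's route, and you correctly identify that $\xi_0$ must be \emph{chosen} rather than prescribed. But the step that carries all the weight --- the claim that the obstruction groups $H^{i}(M;\pi_{i-1}(V^{\mathbb{C}}_{N+1,n}))$ and $H^{i}(M;\pi_{i}(V_{2N+3,2n+1}))$ vanish ``automatically'' by connectivity --- is false in the range where the lemma is actually used. The complex Stiefel manifold $V^{\mathbb{C}}_{N+1,n}$ is only $2(N+1-n)$-connected and $V_{2N+3,2n+1}$ is only $(2N-2n+1)$-connected; under the hypothesis $N-n\geq 1$ these are merely $4$- and $3$-connected. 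In the application to Theorem \ref{1st theorem} one has $N=2n-k$ with $k$ as large as $n-1$, i.e.\ $N=n+1$, so for $n\geq 3$ the coefficient groups $\pi_{i-1}(V^{\mathbb{C}}_{N+1,n})$ and $\pi_i(V_{2N+3,2n+1})$ are nonzero for many $i\leq 2n+1$ and the obstructions do not die for dimension reasons. The worked example you are extrapolating from ($V^{\mathbb{C}}_{2n+1,n}$ being $(2n+2)$-connected) is the Whitney-range case $N=2n+1$; the entire content of the lemma is to get below that range, where this mechanism is unavailable. Choosing $\xi_0$ null-homotopic does give you \emph{existence} of a complex monomorphism $\xi_0\hookrightarrow\varepsilon^{N+1}_M(\mathbb{C})$ (since $\xi_0\oplus\varepsilon^1_M(\mathbb{C})$ is trivial), but the comparison of the resulting real Stiefel section with the one induced by $Df_0$ is a genuine second computation that your proposal does not control.

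The paper sidesteps both computations. Using the trivial normal bundle it writes down an honest contact form $\alpha+\sum_i r_i^2\,d\theta_i$ on the tubular neighborhood $M\times\mathbb{D}^{2(N-n)}\subset\mathbb{R}^{2N+1}$, so the only obstruction problem is to extend the underlying \emph{almost contact} structure over $\mathbb{R}^{2N+1}$; those obstructions lie in $H^{i}(M;\pi_i(\Gamma_{N+1}))$ and are killed not by connectivity but by choosing the homotopy class of $\xi_0$ so that its image under the stabilization $In:\Gamma_{n+1}\to\Gamma_{N+1}$ matches the section induced by an ambient almost contact structure --- this is possible precisely because $In_*$ is an isomorphism on $\pi_i$ for $i\leq 2n$ and onto for $i=2n+1$. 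Gromov's existence h-principle on the open manifold $\mathbb{R}^{2N+1}$ (Theorem \ref{existence of contact structure gromov}) then yields a contact structure $\eta_0$ containing $(M,\xi_0)$, and only at the last step does one embed $(\mathbb{R}^{2N+1},\eta_0)$ into $(\mathbb{R}^{2N+3},\xi_{std})$ via Theorem \ref{h-principle for contact embedding}. If you want to salvage your more direct approach, you would need to replace your connectivity claim with an analogous ``adjust the homotopy class of $\xi_0$ to match a reference section'' argument, and verify that both the complex-frame and real-frame comparisons can be aligned simultaneously; as written, that is exactly the missing content.
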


\begin{proof}  
	By assumption, there is an embedding $\iota : M \hookrightarrow {\mathbb{R}}^{2N+1}$ with normal bundle of embedding $\nu(\iota)$ trivial. Since M is a $\pi$--manifold, $TM\oplus\epsilon^1_M \cong \epsilon^{2n+2}_M$. So, any section to the associated $\Gamma_{n+1}$-bundle of $TM\oplus\epsilon^1_M$ is given by a homotopy class of map $s: M \rightarrow \Gamma_{n+1}$. By \cite{BEM}, in every homotopy class of an almost contact structure there is a genuine contact structure. Fix a homotopy class of an almost contact structure on $M^{2n+1}$. Let $\xi$ be a contact structure representing it. Let $E(\nu)$ denote the total space of $\nu(\iota)$. Since $\xi$ is co-orientable, $$TE(\nu) \oplus \varepsilon^{1} \cong TM \oplus \nu(\iota) \oplus \varepsilon^1 \cong \xi \oplus \varepsilon^2 \oplus \varepsilon^{2(N-n)}$$. We now define a contact structure on the tubular neighborhood $E(\nu)$ of $M$, such that its restriction to $M$ is contact. Let $\alpha$ be a contact form representing $\xi$. Let $(r_1,\theta_1,r_2,\theta_2,...,r_{N-n},\theta_{N-n})$ be a cylindrical co-ordinate system on $\mathbb{D}^{2(N-n)}$. The 1-form $\tilde{\alpha} = \alpha + \Sigma^{N-n}_{i=1}(r_i^2d\theta_i)$ defines a contact structure on $E(\nu) \cong M\times\mathbb{D}^{2N-2n}$ that restricts to the contact structure $\xi$ on $M$. Let $J_{\xi}$ be an almost complex structure on the stable tangent bundle of $TM$ that induces the contact structure $\xi$ on $M$. Put the standard complex structure $J_0(N-n)$ on the normal bundle $\nu(\iota)$ and define an almost complex structure on $TE(\nu(\iota)) \oplus \epsilon^1_E$ given by $J_{\xi} \oplus J_0(N-n)$. Note that over each fiber of $E(\nu)$, $d\tilde{\alpha}$ restricts to the standard symplectic structure on $\mathbb{D}^{2N-2n}$ compatible with $J_0(N-n)$. So, the almost contact structure associated to $\tilde{\alpha}$ is the same as the almost contact structure induced by $J_{\xi} \oplus J_0(N-n)$. If we can extend this almost contact structure on $E(\nu)$ to all of $\mathbb{R}^{2N+1}$, then by Theorem \ref{existence of contact structure gromov}, we will get a contact embedding of $(M,\xi)$ into $(\mathbb{R}^{2N+1},\eta_0)$ for some contact structure $\eta_0$.
	  
	Now we show how to extend the section $s_{\xi}: M \rightarrow \Gamma_{N+1}$, given by $J_{\xi} \oplus J_0(N-n)$, to all of $\mathbb{R}^{2N+1}$. The obstructions to such an extension lie in $H^{i+1}(\mathbb{R}^{2N+1},M;\pi_i(\Gamma_{N+1})) \cong H^i(M;\pi_i(\Gamma_{N+1}))$, for $1 \leq i \leq 2n+1$. Consider the section $s_{\eta} : M \rightarrow \Gamma_{N+1}$ induced by $\eta|_{\iota(M)}$, for some contact structure $\eta$ on $\mathbb{R}^{2N+1}$. If $s_{\xi}$ is homotopic to $s_{\eta}$, then the obstructions vanish and $s_{\xi}$ extends to all of $\mathbb{R}^{2N+1}$.
	 
	Let $In: \Gamma_{n+1} \hookrightarrow \Gamma_{N+1}$ be the inclusion map given by $J(n+1) \longmapsto J(n+1) \oplus J_0(N-n)$. Consider the fibration $\Gamma_m \xrightarrow{j_m} \Gamma_{m+1} \rightarrow S^{2m}$ \cite{Ha}. Here, $j_m$ denotes the inclusion map that sends $J(m)$ to $J(m) \oplus J_0(2)$. From this we get the following long exact sequence.
	 
	$$ ...\longrightarrow \pi_{i+1}(S^{2m}) \longrightarrow \pi_{i}(\Gamma_m) \longrightarrow \pi_{i}(\Gamma_{m+1}) \longrightarrow \pi_{i}(S^{2m}) \longrightarrow...$$
	
	It follows that $j_m$ induces isomorphism on $\pi_i$ for $i\leq 2m-2$ and onto homomorphism for $i = 2m-1$. Note that the composition map $\Gamma_{n+1} \xrightarrow{j_N \circ j_{N-1} \circ ... \circ j_{n+1}} \Gamma_{N+1}$ is the same as the one defined by $In$. Here, $j_{n+1}$ induces isomorphism on $\pi_{i}$ for $i\leq 2n$ and onto homomorphism for $i=2n+1$. For $l \geq n+2$, $j_l$'s induce isomorphisms on $\pi_i$'s, for all $i \leq 2n+1$. Therefore, $In$ induces isomorphism on the $i^{th}$-homotopy group, for $1 \leq i \leq 2n$ and onto homomorphism for $i = 2n+1$. So, we can choose the homotopy class of $\xi$ so that the homotopy class of the image of the corresponding section $s_{\xi} : M \rightarrow \Gamma_{n+1}$ under $In$ is the same as the homotopy class of the map $s_{\eta} : M \rightarrow \Gamma_{N+1}$. Thus, there is a contact structure $\xi_0$ on $M$ such that the corresponding section $s_{\xi_0}: M \rightarrow \Gamma_{N+1}$ extends to all of $\mathbb{R}^{2N+1}$. Therefore, $(M,\xi_0)$ contact embeds in $(\mathbb{R}^{2N+1},\eta_0)$ for some contact structure $\eta_0$ on $\mathbb{R}^{2N+1}$. By Theorem \ref{h-principle for contact embedding}, $(\mathbb{R}^{2N+1},\eta_0)$ contact embeds in $(\mathbb{R}^{2N+3},\xi_{std})$. Hence, $(M,\xi_0)$ isocontact embeds in $(\mathbb{R}^{2N+3},\xi_{std})$. \end{proof} 

	\begin{remark}
	For $n \geq 4$, the groups $\pi_{2n+1}(\Gamma_{n+1})$ have the following values \cite{Ha}:

	\begin{equation}
	\pi_{2n+1}(\Gamma_{n+1}) = \begin{cases}
	
	\mathbb{Z}\oplus\mathbb{Z}_2 \ \ & \text{for} \  \ n \equiv 3 \ (\text{mod} \ 4) \\
	\mathbb{Z}_{(n-1)!} \ \ & \text{for} \  \ n \equiv 0 \ (\text{mod} \ 4) \\ \mathbb{Z} \ \ & \text{for} \  \ n \equiv 1 \ (\text{mod} \ 4) \\
	\mathbb{Z}_{\frac{(n-1)!}{2}} \ \ & \text{for} \  \ n \equiv 2 \ (\text{mod} \ 4)
	\end{cases}
	\end{equation}
	
	Whereas, $\pi_{2n+1}(\Gamma_{N+1})$ is either $0$ or $\Z_2$. So, for $n \geq 4$, the onto map induced by $In$ on $\pi_{2n+1}$ has a non-trivial kernel. Thus, we can actually choose a homotopy class of almost contact structures on $M$ which is not null-homotopic and which isocontact embeds in $(\mathbb{R}^{2N+3},\xi_{std})$.	
	\end{remark}

\begin{proof}[\textbf{Proof of Theorem \ref{1st theorem}}] Since every homotopy sphere $\Sigma^{2n+1}$ admits an almost contact structure, $M^{2n+1}\sharp \Sigma^{2n+1}$ also admits an almost contact structure. By statement $2$ of the Theorem \ref{thm:sapio}, if $M^{2n+1}$ bounds a $\pi$--manifold then it satisfies the hypothesis of Lemma $3.1$, for $N= 2n-k$. By statement $1$ of the Theorem \ref{thm:sapio}, whenever $M^{2n+1}$ does not bound a $\pi$--manifold, we can take connected sum with a suitable homotopy sphere and embed the resulting $\pi$--manifold in $\mathbb{R}^{4n-2k+1}$ with a trivial normal bundle. This hypothesis on the normal bundle of embedding is the only thing that we need to prove the present theorem. Therefore, it is enough to prove statements $(1)$ to $(3)$, for manifolds that bound $\pi$--manifold.

\textbf{(1)} The result follows from Lemma $3.1$.

\textbf{(2)} As discussed in the proof of Lemma $3.1$, given any contact structure $\xi$ on $M^{2n+1}$ and an embedding $\iota: M \rightarrow \mathbb{R}^{4n-2k+1}$ with a trivial normal bundle $\nu$, the obstructions to extend the almost contact structure on $E(\nu)$ to all of ${\mathbb{R}}^{4n-2k+1}$, lie in the groups $H^i(M;\pi_i(\Gamma_{2n-k+1}))$, for $ 1\leq i \leq2n+1$. Since $M$ is $k$-connected, there are no obstructions in dimensions $1$ to $k$. Being $k$-connected also implies that $M\smallsetminus\{pt.\}$ deformation retracts onto the $(2n-k)$-skeleton of $M$. So, the obstructions in dimensions $(2n-k+1)$ to $2n$ also vanish. Thus, we now only consider values of $i$ in $\{k+1, k+2,...,2n-k+1\}$. Since $k \leq n-1$, by the theorem of Bott (\ref{bott's theorem}), we get the following for $1\leq i \leq 2n+1$.
	
	  $$\pi_{i}(\Gamma_{2n-k+1}) = \begin{cases}
	0  \  \   & for \ \ i \equiv 1,3,4,5 \ (mod \ 8) \\ {\mathbb{Z}}_2   \  \ & for \ \ i \equiv 0,7 \ (mod \ 8) \\ \mathbb{Z} \  \ & for \ \ i \equiv 2,6 \ (mod \ 8)
	\end{cases} $$
	
	So, for $i \equiv 1,3,4,5 \ (mod \ 8)$, there are no obstructions. For $i \equiv 0,2,6,7 \ (mod \ 8)$, $H^i(M^{2n+1},\tilde{G}) \cong H_{2n-i+1}(M^{2n+1},\tilde{G}) = 0$ by hypothesis. Here, $\tilde{G}$ is either $\mathbb{Z}$ or $\mathbb{Z}_2$. Moreover, for $n \not \equiv 3 \ (mod \ 4)$, $\pi_{2n+1}(\Gamma_{2n-k+1}) = 0$. Hence, there is no obstructions in the top dimension. Thus, we can extend the almost contact structure for any $\xi$ and the result follows.
	
\textbf{(3)} Note that every assumption in statement $(2)$ holds for statement $(3)$, except that now we have $H^i(M^{2n+1},\tilde{G}) \cong H_{2n-i+1}(M^{2n+1},\tilde{G}) = 0$ for $i \equiv 0,7 \ (mod \ 8)$. Thus, we are left to show that the obstructions vanish for $i \equiv 2,6 \ (mod \ 8)$. We now claim that in the proof of Lemma $3.1$, both $s_{\xi}$ and $s_{\eta}$ factors through the map $\tilde{j}$ in the fibration $$U(N+1) \rightarrow SO(2N+2) \xrightarrow{\tilde{j}_{N+1}} \Gamma_{N+1}$$. Since $\eta$ was a contact structure on $\mathbb{R}^{2N+1}$, the assertion is clear for $s_{\eta}$. The reason for $s_{\xi}$ is the following. Since $\xi$ is an $SO$-contact structure, the almost contact structure associated to $\xi$, $M \rightarrow \Gamma_{n+1}$, factors through the map $SO(2n+2) \xrightarrow{\tilde{j}_{n+1}} \Gamma_{n+1}$. Let $\hat{i} : SO(2n+2) \rightarrow SO(2N+2)$ denote the inclusion map given by $A \mapsto A \oplus I_{2(N-n)}$. Recall that $\tilde{j}$ takes a matrix $A \in SO(2m+2)$ to $A^{-1}J_0(m+1)A \in \Gamma_{m+1}$. The assertion then follows from the commutative diagram below.
	
	 \begin{equation*}
	\begin{CD}
	SO(2n+2)   @>\hat{i}>>  SO(2N+2)\\
	@VV\tilde{j}_{n+1}V        @VV\tilde{j}_{N+1}V\\
	\Gamma_{n+1}  @>In>> \Gamma_{N+1}
	\end{CD}
	\end{equation*}
	
Thus, the homotopy obstructions come from the groups $H^i(M;\pi_i(SO(2N+2)))$. Since $\pi_{i}(SO) = 0$ for $i \equiv 2,6 \ (mod \ 8)$, the obstructions vanish. Therefore, following the proof of Lemma $3.1$, we get an isocontact embedding of $(M,\xi)$ into $(\mathbb{R}^{4n-2k+3},\xi_{std})$.

\end{proof}

\begin{proof}[\textbf{Proof of Corollary \ref{1st corollary}}]  
	
	\textbf{(1)} Follows from Theorem \ref{1st theorem} by putting $k = n-1$.
	
	\textbf{(2)} Following the proof of Theorem \ref{1st theorem}, we can see that the only obstructions to extending the almost contact structure on the normal bundle to all of ${\mathbb{R}}^{2n+3}$ lie in the groups $H^n(M;\pi_n(SO))$ and $H^{n+1}(M;\pi_{n+1}(SO))$. Since both $\pi_n(SO)$ and $\pi_{n+1}(SO)$ vanish for $n \equiv 4,5 \ (mod \ 8)$, the result follows.

\end{proof}

\begin{proof}[\textbf{Proof of Theorem \ref{2nd theorem}}] 

\textbf{(1)}  The existence of a contact monomorphism from $(TM^{2n+1},\xi)$ to $({T\mathbb{R}}^{2N+1},\eta_{st})$ is equivalent to the existence of a section $s: M \rightarrow V_{N,n}^{\C}$ of the associated bundle of $TM$. Since $TM$ is trivial, such a section always exists. Since any parallelizable manifold $M^{2n+1}$ immerses in $\mathbb{R}^{2n+3}$ and has a contact monomorphism in $(T\mathbb{R}^{2n+3},\xi_{std})$, by Theorem \ref{h-principle for contact immersion}, $(M,\xi)$ isocontact immerses in $(\mathbb{R}^{2n+3},\xi_{std})$.

\textbf{(2)} Any section corresponding to a contact monomorphism also induces a section $s_0$ to the associated $V_{2N+1,2n+1}$-bundle of $TM$. Assume that $M$ is $(2k-1)$-connected. By Theorem \ref{haefligar-hirsch}, there exists an embedding $f: M^{2n+1} \rightarrow \mathbb{R}^{4n-2k+3}$. Let $s_f$ be the corresponding section to $V_{4n-2k+3,2n+1}$. The homotopy obstructions between $s_0$ and $s_f$ lie in the groups $H^i(M^{2n+1},\pi_i(V_{4n-2k+3,2n+1}))$, for $1\leq i\leq 2n+1$. Since $M^{2n+1}\smallsetminus{\mathbb{D}}^{2n+1}$ deformation retracts onto the $(2n-2k+1)$-skeleton of $M$ and $V_{4n-2k+3,2n+1}$ is $(2n-2k+1)$-connected, there are no obstructions till dimension $2n$. Therefore, the only homotopy obstruction lies in $H^{2n+1}(M^{2n+1},\pi_{2n+1}(V_{4n-2k+3,2n+1}))$. By \cite{HM}, for $k = 3$, and $n \equiv 0,1 \ (mod \ 4)$, $\pi_{2n+1}(V_{4n-3,2n+1}) = 0$ . Thus, by Theorem \ref{h-principle for contact embedding}, $(M^{2n+1},\xi)$ has an isocontact embedding in $(\mathbb{R}^{4n-3},\xi_{std})$.

\end{proof}

Note that the proof of statement $(2)$ in Theorem \ref{2nd theorem} does not necessarily require a parallelizable manifold. In general, the following can be said.

\begin{proposition}
	A $5$-connected contact manifold $(M^{2n+1},\xi)$ admits an isocontact embedding in $(\mathbb{R}^{4n-3},\xi_{std})$ for $n \geq 3$ and $n \equiv 0,1 \ (mod \ 4)$, if it admits an isocontact immersion in $(\mathbb{R}^{4n-3},\xi_{std})$. 
\end{proposition}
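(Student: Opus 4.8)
The plan is to run the proof of statement (2) of Theorem~\ref{2nd theorem} essentially verbatim, after observing that parallelizability entered there only to guarantee a contact monomorphism $TM \to T\mathbb{R}^{4n-3}$ (equivalently, by Theorem~\ref{h-principle for contact immersion}, an isocontact immersion), and that datum is now supplied directly by hypothesis. Writing $2N+1 = 4n-3$, so $N = 2n-2$, the given isocontact immersion $\iota:(M,\xi)\looparrowright(\mathbb{R}^{4n-3},\xi_{std})$ has differential $D\iota$ a contact monomorphism; via the inclusions $V^{\mathbb{C}}_{N,n}\subset V_{2N,2n}\subset V_{2N+1,2n+1}$ it determines a section $s_{\mathbb{C}}$ of the associated $V_{4n-3,2n+1}$-bundle of $TM$, one that lands in the sub-bundle of frames recording the contact (complex) structure.

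Next I would produce a smooth embedding $f:M^{2n+1}\hookrightarrow\mathbb{R}^{4n-3}$. Since $M$ is $5$-connected this follows from the Haefliger--Hirsch theorem (Theorem~\ref{haefligar-hirsch}) in the range where it applies; in the borderline small cases $n=4,5$ one checks that $5$-connectivity together with Poincar\'e duality forces $M$ to be a homotopy sphere, which embeds trivially, so $f$ exists in all cases. Let $s_f$ denote the section of the same $V_{4n-3,2n+1}$-bundle induced by $f$. The goal is to show that $s_{\mathbb{C}}$ and $s_f$ are homotopic as sections: granting this, the triviality of $T\mathbb{R}^{4n-3}$ turns the section homotopy into a homotopy of monomorphisms covering $f$ from $Df$ to the contact monomorphism $s_{\mathbb{C}}$, and since $n\le N-2$ holds for every $n\equiv 0,1\ (\mathrm{mod}\ 4)$ with $n\ge 3$, the closed case of Theorem~\ref{h-principle for contact embedding} promotes $f$ to an isocontact embedding of $(M,\xi)$ in $(\mathbb{R}^{4n-3},\xi_{std})$.

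It therefore remains to kill the homotopy obstructions between $s_{\mathbb{C}}$ and $s_f$, which lie in $H^i(M;\pi_i(V_{4n-3,2n+1}))$ for $1\le i\le 2n+1$. The Stiefel manifold $V_{4n-3,2n+1}$ is $(2n-5)$-connected, and because $M$ is $5$-connected the complement $M\smallsetminus D^{2n+1}$ deformation retracts onto a complex of dimension at most $2n-5$; as the fiber is $(2n-5)$-connected, $s_{\mathbb{C}}$ and $s_f$ are automatically homotopic over $M\smallsetminus D^{2n+1}$. Hence the only obstruction to a global section homotopy sits over the top cell, in $H^{2n+1}(M;\pi_{2n+1}(V_{4n-3,2n+1}))$. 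This last group is the crux, and the sole place where the congruence $n\equiv 0,1\ (\mathrm{mod}\ 4)$ is used: by the unstable Stiefel computation of \cite{HM}, exactly as in the proof of Theorem~\ref{2nd theorem}, one has $\pi_{2n+1}(V_{4n-3,2n+1})=0$ for such $n$, so the top obstruction vanishes, $s_{\mathbb{C}}\simeq s_f$, and the h-principle concludes the argument. The main obstacle is precisely pinning down this single unstable homotopy group (and verifying that $f$ can genuinely be taken into $\mathbb{R}^{4n-3}$, so that the immersion and embedding sections live in the same bundle); everything else is a transcription of the earlier proof with the parallelizability hypothesis replaced by the assumed isocontact immersion.
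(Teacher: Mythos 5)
Your proposal is correct and is essentially the paper's own argument: the paper gives no separate proof of this proposition, stating only that the proof of statement (2) of Theorem \ref{2nd theorem} ``does not necessarily require a parallelizable manifold,'' and your write-up is precisely that proof with the contact monomorphism supplied by the hypothesized isocontact immersion instead of by triviality of $TM$; the obstruction count (fiber $V_{4n-3,2n+1}$ is $(2n-5)$-connected, $M\smallsetminus D^{2n+1}$ retracts onto a $(2n-5)$-complex, leaving only $H^{2n+1}(M;\pi_{2n+1}(V_{4n-3,2n+1}))$, which vanishes for $n\equiv 0,1\ (\mathrm{mod}\ 4)$ by \cite{HM}) matches exactly. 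The one place you go beyond the paper is the borderline cases $n=4,5$, where Theorem \ref{haefligar-hirsch} with $k=5$ requires $n\geq 7$; you correctly observe that $5$-connectivity then forces $M$ to be a homotopy sphere, but the assertion that such a sphere ``embeds trivially'' in $\mathbb{R}^{13}$ (resp.\ $\mathbb{R}^{17}$) is not automatic, since codimension $4$ (resp.\ $6$) is below the metastable range and exotic spheres need not embed there without further input (e.g.\ bounding a $\pi$--manifold) -- though this is really a defect of the proposition's stated range $n\geq 3$, whose own implicit proof covers only $n\geq 7$.
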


\begin{proof}[\textbf{Proof of Corollary \ref{2nd corollary}}]
	Consider $\mathbb{R}^{N}$ as $\mathbb{R}^{N-2} \times \mathbb{R}^2$. It is well known that $\mathbb{R}^N \setminus (\mathbb{R}^{N-2} \times \{0\})$ can be decomposed as $\mathbb{R}^{N-1} \times S^1$. This is the so called standard open book decomposition of $\mathbb{R}^{N}$ (see \cite{Ge} or \cite{E}). Say, $M$ is embedded in $\mathbb{R}^{N-1}$. Then using this open book description we can see that $M \times S^1$ naturally embeds in $\mathbb{R}^{N-1} \times S^1 \subset \mathbb{R}^N$. Starting with an embedding of $N^{2n-1}$ in $\mathbb{R}^{2N+1}$ with trivial normal bundle, we can then apply this procedure twice to get an embedding of $N^{2n-1} \times (S^1 \times S^1)$ in $\mathbb{R}^{2N+3}$ with trivial normal bundle. The result then follows from Lemma $3.1$.
	
\end{proof}

	\subsection{Contact embedding of co-dimension $\geq 4$} \label{codim 4 embedn} For embeddings of co-dimension $\geq 4$, we can actually get isocontact embedding in the standard contact structure. Let $\iota$ be an isocontact embedding of $(M,\xi)$ in $(\mathbb{R}^{2N+1},\eta)$. Any two contact structures on $\mathbb{R}^{2N+1}$ are homotopic as almost contact structures. Let $H_t : T\mathbb{R}^{2N+1} \rightarrow T\mathbb{R}^{2N+1}$ be a formal homotopy covering the identity map of $\mathbb{R}^{2N+1}$ such that $H_0 = Id$, $H_1(\eta) = \xi_{std}$ and $H_t(\eta)$ is an almost contact structure on $\mathbb{R}^{2N+1}$ for all $t \in (0,1)$. Then $H_t \cdot D\iota$ gives a formal homotopy covering $\iota$ and $H_1 \cdot D\iota$ is a contact monomorphism of $(TM,\xi)$ into $(T\mathbb{R}^{2N+1},\xi_{std})$. If we assume that $N-n \geq 2$, then by Theorem \ref{h-principle for contact embedding}, $(M^{2n+1},\xi)$ isocontact embeds in $(\mathbb{R}^{2N+1},\xi_{std})$. Thus, for embedding of co-dimension $\geq 4$, we also get contact embedding in the standard contact structure. Moreover, this shows that for $n-k \geq 2$, we can improve the dimension of embedding in Theorem \ref{1st theorem} from $4n - 2k + 3$ to $4n - 2k + 1$. Using this fact with Lemma $3.1$, one can find interesting examples of contact embedding of non-simply connected manifolds. For example, by \cite{MR}, the $7$-dimensional real projective space $RP^7$ embeds in $\mathbb{R}^{13}$ with trivial normal bundle. Thus, $RP^7$ contact embeds in $(\mathbb{R}^{13},\xi_{std})$. Let us look at another simple class of such examples. It is a well known theorem of Hirsch that every oriented, closed $3$-manifold $M^3$ embeds in $\mathbb{R}^5$. By \cite{CS}, every closed, oriented $4$-manifold $V^4$, whose second Stiefel-Whitney class and signature vanish, embeds in $\mathbb{R}^6$. Since, the Euler class of the normal bundle of an embedding vanishes, each of these embeddings has trivial normal bundle. Thus, $W^7 = M^3 \times V^4$ embeds in $\mathbb{R}^{11}$ with trivial normal bundle. Hence, $W^7$ contact embeds in $(\mathbb{R}^{11},\xi_{std})$.              

\

We now prove Theorem \ref{3rd theorem}. The idea of the proof is essentially contained in \cite{EF} (proof of Theorem $1.25$). 

\begin{proof}[\textbf{Proof of Theorem \ref{3rd theorem}}].  
	
	 Since $\iota_1$ and $\iota_2$ are contact embeddings with trivial normal bundle, there exists a contact form $\alpha$ representing $\xi$ and suitable  neighborhoods $\mathcal{N}_j$ of $\iota_j(M)$ in $(\mathbb{R}^{2N+1},\eta_{ot})$, for $j=1,2$, which are contactomorphic to $(M \times \mathbb{D}^{2(N-n)}, \alpha + \Sigma^{N-n}_{i=1}r_i^2d\theta_i)$. Now, $\iota_1$ and $\iota_2$ are isotopic and have isomorphic symplectic normal bundle. Thus, one can use the contact tubular neighborhood theorem for contact submanifold (Theorem $2.5.15$, \cite{Ge}) to get an ambient isotopy $\Phi_t : \mathbb{R}^{2N+1} \longrightarrow \mathbb{R}^{2N+1}$ such that $\Phi_1$ restricts to a contactomorphism from $(\mathcal{N}_1,\eta_{ot}|_{\mathcal{N}_1})$ to $(\mathcal{N}_2,\eta_{ot}|_{\mathcal{N}_2})$ and $\Phi_1(\iota_1(M)) = \iota_2(M)$. Moreover, since we can assume that the isotopy between $\iota_1$ and $\iota_2$ is supported in the complement of an overtwisted contact ball, $\Phi_t$ can be chosen so that it restricts to the identity map on that overtwisted contact ball in the complement of $\mathcal{N}_1$ in $(\mathbb{R}^{2N+1},\eta_{ot})$. Thus, the distribution $(\Phi_1)_*\eta_{ot}$ induces a contact structure on $\mathbb{R}^{2N+1}$ that is overtwisted in the complement of $\mathcal{N}_2$. Now, we look at the homotopy obstructions between $(\Phi_1)_*\eta_{ot}$ and $\eta_{ot}$ relative to $\mathcal{N}_2$. All such obstructions lie in $H^i(\mathbb{R}^{2N+1},\mathcal{N}_2;\pi_i(\Gamma_{N+1}))$, for $1\leq i \leq 2n+1$. By Theorem \ref{bott's theorem}, the group $\pi_i(\Gamma_{N+1})$ vanish for $i \equiv \ 1,3,4,5 \ (mod \ 8)$. Note that $H^i(\mathbb{R}^{2N+1},\mathcal{N}_2;\pi_i(\Gamma_{N+1})) \cong H^{i-1}(M;\pi_i(\Gamma_{N+1})) \cong H_{2n+2-i}(M;\pi_{i}(\Gamma_{N+1}))$. The assumptions that $2n+1$ is of the form $8k+3$ and that the homology groups of $M$ vanish for $i \equiv \ 2,4,5,6 \ (mod \ 8)$ then ensures that the rest of the obstructions also vanish. Theorem \ref{homotopy of contact structure bem} then implies that there is a contact isotopy $\Psi_t$ of $\mathbb{R}^{2N+1}$ relative to $\mathcal{N}_2$ such that $(\Psi_1)_*\cdot (\Phi_1)_*\eta_{ot}= \eta_{ot}$. So, $\Psi_1 \cdot \Phi_1$ gives a contactomorphism of $(\mathbb{R}^{2N+1},\eta_{ot})$ that takes $\iota_1(M)$ to $\iota_2(M)$. 

\end{proof}


\begin{thebibliography}{xxxx}
	\bibitem[B]{B} R. Bott, The Stable Homotopy of the Classical Groups, {\it Annals of Mathematics}, Vol. 70, NO. 2(Sep., 1959), 313-337.
	
	\bibitem[BEM]{BEM} J. Borman, Y. Eliashberg, E. Murphy, Existence and classification of overtwisted contact structures in all dimensions, {\it Acta Math.}, vol. 215, No. 2 (2015), 281-361.
	
	\bibitem[CE]{CE} R. Casals, J.B. Etnyre, Non-simplicity of isocontact embeddings in all higher dimensions, { \it arXiv:1811.05455}.
	
    \bibitem[CM]{CM} R. Casals, E. Murphy, Contact topology from the loose viewpoint, {\it Proceedings of ${22}^{nd}$ G\"okova Geometry-Topology Conference, 81-115}


	\bibitem[CPP]{CPP} R. Casals, D. Pancholi, F. Presas, The Legendrian Whitney Trick, { \it arXiv:1908.04828}.
	
	
	\bibitem[CS]{CS} S. E. Cappell, J. L. Shaneson, Embedding and immersion of four-dimensional manifolds in $\mathbb{R}^6$, Proceedings of the 1977 Georgia Topology Conference, Academic Press 1979, 301-305.
	
	\bibitem[E]{E} J. B. Etnyre, Lectures on open book decomposition and contact structures, {\it arXiv:math/0409402}.
	
	\bibitem[EF]{EF} J. B. Etnyre, R. Furukawa, Braided embeddings of contact $3$-manifolds in the standard contact $5$-sphere, {\it arXiv:1510.03091}.
	
	\bibitem[EL]{EL} J. Etnyre  and Y. Lekili, Embedding all contact $3$--manifolds in a fixed contact $5$--manifold, {\it arXiv:1712.09642v1 [math.GT]}.	
	
	\bibitem[EM]{EM} Y. Eliashberg, N. Mishachev, Introduction to the h-principle, {\it Graduate Studies in Mathematics} 48 (2002), AMS. 
	
	\bibitem[Ge]{Ge} H. Geiges, An Introduction to Contact Topology, {\it Cambridge Studies in Advanced Mathematics} 109.
	
	\bibitem[Gr]{Gr} M. Gromov, Partial Differential Relations, {\it Ergeb. Math. Grenzgeb.} 9 (1986), Springer--Verlag.
	
	\bibitem[Ha]{Ha} B. Harris, Some calculations of homotopy groups of symmetric spaces, {\it Transactions of the American Mathematical Society} Vol. 106, No. 1 (Jan., 1963), pp. 174-184.
	
	\bibitem[HH]{HH} A. Haefliger and M. W. Hirsch, On the existence and classification of differentiable embeddings, {\it Topology} 2 (1963), 129-135.
	
	
	\bibitem[Hi]{Hi} M. W. Hirsch, Immersions of Manifolds, {\it Transactions of the American Mathematical Society} Vol. 93, No. 2(Nov.,1959), 242-276.
	
	\bibitem[HM]{HM} C. S. Hoo, M. E. Mahowald, Some homotopy groups of Stiefel manifolds, {\it Bulletin of AMS} 71 (1965), no. 4, 661-667.
	
		
	\bibitem[HoH]{HoH} K. Honda, Y. Huang, Convex hypersurface theory in contact topology, { \it arXiv:1907.06025}.
	
	
	\bibitem[Ka]{Ka} N. Kasuya, On contact embeddings of contact manifolds in the odd dimensional Euclidan spaces, {\it International Journal of Mathematics} vol. 26, No. 7 (2015) 1550045.
	
	\bibitem[Ka2]{Ka2} N. Kasuya, An obstruction for co-dimension two contact embeddings in the odd dimensional Euclidean spaces, {\it J. Math. Soc. Japan} Volume 68, Number 2 (2016), 737-743.
	
	\bibitem[Ke]{Ke} M. A. Kervaire, A note on obstructions and characteristic classes, {\it American Journal of Mathematics} Vol. 81, No. 3 (Jul.,1959), 773-784.
	
	\bibitem[KM]{KM} M. A. Kervaire, J. W. Milnor, Groups of Homotopy Spheres: One, {\it Annals of Mathematics} Vol. 77, No. 3. (May, 1963), pp. 504-537. 
	  
	\bibitem[Ko]{Ko} O. van Koert, Lecture notes on stabilization of contact open books {\it arXiv:1012.4359v1} (2010).
	
	\bibitem[Kos]{Kos} A. A. Kosinski, Differential Manifolds, {\it Academic Press,Inc}.
	
	\bibitem[Mo]{Mo} A. Mori, Global models of contact forms. {\it J. Math. Sci. Univ. Tokyo}, vol. 11(4), 447454, (2004).
	
	\bibitem[MR]{MR} R. J. Milgram, E. Rees, On the normal bundle to an embedding, {\it Topology} Vol. 10, pp. 299-308, 1971. 
	
	\bibitem[PP]{PP} D. M. Pancholi, S. Pandit, Iso-contact embeddings of manifolds in co-dimension $2$, {\it arXiv:1808.04059} (2018).
	

	
	\bibitem[S]{S} Saha, K. On Open Book Embedding of Contact Manifolds in the Standard Contact Sphere. {\it Canadian Math. Bulletin}, 1-16 (2019), doi:10.4153/S0008439519000808.
	
	\bibitem[Sa]{Sa} R. De Sapio, Embedding $\pi$-manifolds, {\it Annals of Mathematics} vol. 82. No. 2(Sep.,1965). 213-224.(http://www.jstor.org/stable/1970642)
	
	\bibitem[St]{St} N. Steenrod, The Topology of Fibre BUndles. {\it PMS(14)}, Princeton University Press.
	
	\bibitem[Tor]{Tor} D. Martinez-Torres, Contact embeddings in standard contact spheres via approximately holomorphic geometry, {\it J. Math.	Sci. Univ. Tokyo}, vol. 18 (2), (2011), 139–154.
	
	\bibitem[Wh]{Wh} H. Whitney, The self-intersectons of smooth $n$-manifolds in $2n$-space, {\it Ann. of Math.} (2), 45 (1944). 220-246.
\end{thebibliography}
\end{document}